\renewcommand\eqref[1]{(\ref{#1})} 
\newtheorem{theorem}{Theorem}[section]
\newtheorem{corollary}[theorem]{Corollary}
\newtheorem{lemma}[theorem]{Lemma}
\newtheorem{proposition}[theorem]{Proposition}
\newtheorem{definition}[theorem]{Definition}
\theoremstyle{definition}
\newtheorem{remark}[theorem]{Remark}
\newcommand{\wt}[1]{\widetilde{#1}}
\newcommand{\E}{\ensuremath{{\mathcal E}}}
\newcommand{\mb}[1]{\ensuremath{\mathbb{#1}}}
\newcommand{\N}{\mb{N}}
\newcommand{\R}{\mb{R}}
\renewcommand\N{{\mathbb N}_0}
\newcommand{\beq}{\begin{equation}}
\newcommand{\eeq}{\end{equation}}
\newcommand{\eps}{\varepsilon}
\newcommand{\supp}{\mathrm{supp}}
\title[On the wave equation]
{On the wave equation with space dependent coefficients: singularities and lower order terms}
\author[Marco Discacciati]{Marco Discacciati}
\address{
  Marco Discacciati:
  \endgraf
  Department of Mathematical Sciences
  \endgraf
  Loughborough University
  \endgraf
  Loughborough, Leicestershire, LE11 3TU
  \endgraf
  United Kingdom
  \endgraf
  {\it E-mail address} {\rm m.discacciati@lboro.ac.uk}
  }
\author[Claudia Garetto]{Claudia Garetto}
\address{
  Claudia Garetto:
  \endgraf
School of Mathematical Sciences
  \endgraf
 Queen Mary University of London
  \endgraf
 Mile End Road, London, E1 4NS
  \endgraf
  United Kingdom
  \endgraf
  {\it E-mail address} {\rm c.garetto@qmul.ac.uk}
  }
\author[Costas Loizou]{Costas Loizou}
\address{
  Costas Loizou:
  \endgraf
  Department of Mathematical Sciences
  \endgraf
  Loughborough University
  \endgraf
  Loughborough, Leicestershire, LE11 3TU
  \endgraf
  United Kingdom
  \endgraf  {\it E-mail address} {\rm c.loizou@lboro.ac.uk}
  }
\thanks{The second author was supported by the
EPSRC grant  EP/V005529/2}
\date{}
\subjclass[2010]{Primary 35L05: 35L10; Secondary 35D99;}
\keywords{Hyperbolic equations, very weak solutions, regularisation}
\begin{document}

\maketitle

\begin{abstract}
This paper complements the study of the wave equation with discontinuous coefficients initiated in \cite{DGL:22} in the case of time-dependent coefficients. Here we assume that the equation coefficients are depending on space only and we formulate Levi conditions on the lower order terms to guarantee the existence of a very weak solution as defined in \cite{GR:14}. As a toy model we study the wave equation in conservative form with discontinuous velocity and we provide a qualitative analysis of the corresponding very weak solution via numerical methods.
\end{abstract}

\section{Introduction}
In this paper we want to study the well-posedness of the Cauchy problem for the inhomogeneous wave equation with space-dependent coefficients. In detail, we are concerned with 
\beq
\label{CP_1_intro}
\begin{split}
\partial_t^2u-a(x)\partial^2_{x} u+ b_1(x)\partial_x u+ b_2(x)\partial_t u + b_3(x) u&=f(t,x),\quad t\in[0,T],\, x\in\R,\\
u(0,x)&=g_0,\\
\partial_tu(0,x)&=g_1,
\end{split}
\eeq
where $a(x)\ge 0$ and for the sake of simplicity we work in space dimension 1. The well-posedness of \eqref{CP_1_intro} is well-understood when the coefficients are regular, namely smooth. Indeed,  the equation above can be re-written in the variational form
\[
\begin{split}
u_{tt}-(a(x)u_{x})_x+ (a'(x)+b_1(x))u_x+ b_2(x)u_t + b_3(x) u&=f(t,x),\quad t\in[0,T],\, x\in\R,\\
u(0,x)&=g_0,\\
\partial_tu(0,x)&=g_1.
\end{split}
\]
This kind of Cauchy problem has been studied by Oleinik in \cite{O70}. Assuming that the coefficients are smooth and bounded, with bounded derivatives of any order, she proved that the Cauchy problem is $C^\infty$ well-posed provided that  the following Oleinik's condition is satisfied\footnote{There exists a constant $D>0$ such that $(a'(x)+b_1(x))^2 \le   D a(x)$ for all $x\in\R$.} :
\[
(a'+b_1)^2 \prec a.
\]
Note that $a'$ is bounded by $\sqrt{a}$ as a direct consequence of  Glaeser's inequality: If $a\in C^2(\R)$, $a(x)\ge 0$ for all $x\in\R$ and $\Vert a'' \Vert_{L^\infty}\le M_1$, then 
\[
|a'(x)|^2\le 2M_1 a(x),
\]
for all $x\in\R$.

Therefore, $C^\infty$ well-posedness is obtained by simply imposing on the lower order term $b_1$ a Levi condition of the type
\beq	 \label{Levi}
b_1^2(x) \le M_2 a(x),
\eeq
for some $M_2>0$ independent of $x$.
It is of physical interest to understand the well-posedness of this Cauchy problem when the coefficients are less than continuous. This kind on investigation has been initiated in \cite{GR:14} for second order hyperbolic equations with $t$-dependent coefficients and recently extended to inhomogeneous equations in \cite{DGL:22}. Here we want to work with space-dependent coefficients with minimal assumptions of regularity, namely distributions with compact support. We are motivated by the toy model  
\beq
\label{CP_heaviside}
\begin{split}
\partial_t^2u(t,x)-\partial_x (H(x) \partial_x u(t,x))&=0,\qquad t\in[0,T],\, x\in\R,\\
u(0,x)&=g_0(x),\qquad x\in\R,\\
\partial_t u(0,x)&=g_1(x), \qquad x\in\R,
\end{split}
\eeq
where $g_0,\, g_1 \in C_c^{\infty}(\R)$ and $H$ is the Heaviside function ($H(x)=0$ if $x<0$, $H(x)=1$ if $x\ge 0$) or more in general $H$ is replaced by a positive distribution with compact support. Note that the well-posedness of the Cauchy problem for hyperbolic equations has been widely investigated when the equation coefficients are at least continuous, see \cite{ColKi:02, ColKi:02-2, CS, CDS, GR:11, GR:12} and references therein. However, in presence of discontinuities distributional solutions might fail to exists due to the well-known Schwartz impossibility result \cite{Schwartz:impossibility-1954}.

For this reason, as in \cite{G:21, GR:14} we look for solutions of the Cauchy problem \eqref{CP_1_intro} in the \emph{very weak} sense. In other words we replace the equation under consideration with a family of regularised equations obtained via convolution with a net of mollifiers. We will then obtain a net $(u_\eps)_\eps$ that we will analyse in terms of qualitative and limiting behaviour as $\eps\to 0$. 
The paper is organised as follows.

In Section 2 we revisit Oleinik's result in the case of smooth coefficients. We show how her condition on the lower order term $b_1$ can be obtained via transformation into a first order system and energy estimates. Note that the energy is provided by the hyperbolic symmetriser associated to the wave operator. This system approach turns out to be easily adaptable to the case of singular coefficients and in general to the framework of very weak solutions. In Section 3 we pass to consider discontinuous coefficients. After a short introduction to the notion of very weak solution, we formulate Levi conditions on the lower order terms which allow to prove that our Cauchy problem admits a very weak solution of Sobolev type. Some toy models are analysed in Section 4 where we prove that every weak solution to \eqref{CP_heaviside} recovers, in the limit as $\eps\to 0$, the piecewise distributional solution defined in \cite{DO:16}. More singular toy models, defined via delta of Dirac and homogenous distributions,  are also presented in Section 4 and the corresponding very weak solutions investigated numerically.

\section{A revisited approach to Oleinik's result}
\label{sec_Oleinik}
This section is devoted to the Cauchy problem
\beq
\label{CP_1_ex}
\begin{split}
\partial_t^2u-a(x)\partial^2_{x} u+ b_1(x)\partial_x u+ b_2(x)\partial_t u + b_3(x) u&=f(t,x),\quad t\in[0,T],\, x\in\R,\\
u(0,x)&=g_0,\\
\partial_tu(0,x)&=g_1,
\end{split}
\eeq
where $a, b_1, b_2, b_3\in B^{\infty}(\R)$, are smooth and bounded with bounded derivatives of any order, and $a \geq 0$. We also assume that all the functions involved in the system are real-valued. The $C^\infty$ well-posedness of \eqref{CP_1_ex} is known thanks to \cite{O70}. Here, we give an alternative proof of this result based on the reduction to a first order system.

\subsection{System in U} \label{section_classical}
In detail, by using the transformation,
\[
U=(U^0,U_1,U_2)=(u,\partial_{x}u,\partial_tu)^{T},
\]
our Cauchy problem can be rewritten as
\[
\begin{split}
\partial_t U&=A \partial_x U+ B U+F,\\
U(0,x)&=(g_0,g_0',g_1)^{T},
\end{split}
\]
where
\[
A=\left(
	\begin{array}{ccc}
	0& 0 & 0\\
	0& 0 & 1\\
    0& a & 0 
	\end{array}
	\right),
	\quad
B=\left(
	\begin{array}{ccc}
	0& 0 & 1\\
	0& 0 & 0\\
    -b_3 &-b_1 & -b_2 
	\end{array}
	\right) 
	\text{ and }	\quad	
F=\left(
	\begin{array}{c}
	0\\
	0 \\
	f
        \end{array}
	\right).
	\]

The matrix $A$  has a block diagonal shape with a $1\times1$ block equal to $0$ and a $2\times2$ block  in Sylvester form and has the symmetriser 
\[
Q=\left(
	\begin{array}{ccc}
	1&0&0 \\
	0&a& 0\\
    0&0 & 1 
	\end{array}
	\right),
\]
i.e., $QA=A^\ast Q=A^tQ$. The symmetriser $Q$ defines the energy  
\[
E(t)=(QU,U)_{L^2}.
\]
Since $a\ge 0$, we have that the bound from below
\[
E(t)=\Vert U^0\Vert^2_{L^2}+(aU_1,U_1)_{L^2}+\Vert U_2\Vert^2_{L^2}\ge \Vert U^0\Vert^2_{L^2}+\Vert U_2\Vert^2_{L^2}
\]
holds, for all $t\in[0,T]$. Assume that the initial data $g_0, g_1$ are compactly supported and that $f$ is compactly supported with respect to $x$. By the finite speed of propagation, it follows that that the solution $U$ is compactly supported with respect to $x$ as well. Hence, by integration by parts we obtain the following energy estimate:
 \[
 \begin{split}
 &\frac{dE(t)}{dt}=(\partial_t(QU),U)_{L^2}+(QU,\partial_tU)_{L^2}\\
 &=(Q\partial_tU,U)_{L^2}+(QU, A\partial_{x}U)_{L^2}+(QU, BU)_{L^2}+(QU,F)_{L^2}\\
 &=(QA\partial_{x}U, U)_{L^2}+(QBU, U)_{L^2}+(QU, A\partial_{x}U)_{L^2}+(QU, BU)_{L^2}+2(QU,F)_{L^2}\\
 &= (QA\partial_{x}U, U)_{L^2}+(QBU, U)_{L^2}+(A^*QU, \partial_{x}U)_{L^2}+(B^*QU,U)_{L^2}+2(QU,F)_{L^2}\\
 &= (QA\partial_{x}U, U)_{L^2}+(QAU, \partial_{x}U)_{L^2}+(QBU, U)_{L^2}+(B^*QU,U)_{L^2}+2(QU,F)_{L^2}\\
 &=(QA\partial_{x}U,U)_{L^2}-(\partial_{x}(QAU),U)_{L^2}+((QB+B^*Q)U, U)_{L^2}+2(QU,F)_{L^2}\\
 &=-((QA)'U,U)_{L^2}+((QB+B^*Q)U, U)_{L^2}+2(QU,F)_{L^2}.
\end{split}
 \]
Since 
\[
(QA)'=\left(
	\begin{array}{ccc}
	0&0&0\\
	0&0& a'\\
    0&a' & 0
	\end{array}
	\right),
\]
from Glaeser's inequality ($|a'(x)|^2\le 2M_1 a(x)$) it immediately follows that 
\begin{align}\label{estimate1_3x3}
((QA)'U,U)_{L^2}&=2(a'U_1,U_2)_{L^2}\le 2\Vert a'U_1\Vert_{L^2}\Vert U_2\Vert_{L^2}\le \Vert a'U_1\Vert_{L^2}^2+\Vert U_2\Vert^2_{L^2}\\
&\le 2M_1(aU_1,U_1)_{L^2}+\Vert U_2\Vert^2_{L^2}
\le \max(2M_1,1)E(t). \nonumber
\end{align}
Furthermore we have,
\[
QB+B^*Q=\left(
	\begin{array}{ccc}
	0     & 0   & 1-b_3\\
	0     & 0   & -b_1\\
    1-b_3 &-b_1 & -2b_2
	\end{array}
	\right).
\]
Hence, using the Levi condition \eqref{Levi}, $b_1^2(x)\le M_2 a(x)$, we have
\begin{align} \label{estimate2_3x3}
((QB+B^*Q)U, U)_{L^2}&=2((1-b_3)U_2,U^0)- 2(b_1 U_1,U_2)_{L^2}- 2(b_2 U_2,U_2)_{L^2} \nonumber\\
&\le 2\Vert (1-b_3)U_2\Vert_{L^2}\Vert U^0\Vert_{L^2}+ 2\Vert b_1 U_1\Vert_{L^2}\Vert U_2\Vert_{L^2}+ 2\Vert b_2\Vert_\infty \Vert U_2\Vert_{L^2}^2  \nonumber\\
&\le \Vert (1-b_3)\Vert_\infty(\Vert U_2\Vert_{L^2}^2+\Vert U^0\Vert_{L^2}^2)+\Vert b_1 U_1\Vert_{L^2}^2+(1+2\Vert b_2\Vert_\infty)\Vert U_2\Vert_{L^2}^2 \nonumber\\
&= \Vert (1-b_3)\Vert_\infty(\Vert U_2\Vert_{L^2}^2+\Vert U^0\Vert_{L^2}^2)+(|b_1|^2  U_1,U_1)+(1+2\Vert b_2\Vert_\infty)\Vert U_2\Vert_{L^2}^2  \nonumber \\
& \le (1+\Vert b_3\Vert_\infty)\Vert U^0\Vert_{L^2}^2+ M_2(aU_1,U_1)+(2+2\Vert b_2\Vert_\infty+\Vert b_3\Vert_\infty)\Vert U_2\Vert_{L^2}^2 \nonumber\\
& \le \max(M_2,1+\Vert b_3\Vert_\infty, 2+2\Vert b_2\Vert_\infty+\Vert b_3\Vert_\infty)E(t). 
\end{align}
Finally,
\beq \label{estimate3_3x3}
2(QU,F)_{L^2}=2(U_2,f)_{L^2}\le 2\Vert U_2\Vert_{L^2}\Vert f\Vert_{L^2} \le\Vert U_2\Vert_{L^2}^2+\Vert f\Vert_{L^2}^2 \le E(t)+\Vert f\Vert_{L^2}.
\eeq
Combining \eqref{estimate1_3x3}, \eqref{estimate2_3x3} and \eqref{estimate3_3x3}, we obtain the estimate 
\[
\frac{dE}{dt}\le cE(t)+\Vert f\Vert_{L^2},
\]
where

\begin{align*}
c&=\max(2M_1+1,2)+\max(M_2,1+\Vert b_3\Vert_\infty, 2+2\Vert b_2\Vert_\infty+\Vert b_3\Vert_\infty)\\
&=\max(\max(2M_1+1+M_2, 2M_1+2+\Vert b_3\Vert_\infty, 2M_1+3+2\Vert b_2\Vert_\infty+\Vert b_3\Vert_\infty),\\
 &\quad \quad\max( 2+M_2,3+\Vert b_3\Vert_\infty, 4+2\Vert b_2\Vert_\infty+\Vert b_3\Vert_\infty))\\
&=\max(2M_1+1+M_2, 2M_1+2+\Vert b_3\Vert_\infty, 2M_1+3+2\Vert b_2\Vert_\infty+\Vert b_3\Vert_\infty,\\
 &\quad \quad \quad \quad 2+M_2,3+\Vert b_3\Vert_\infty, 4+2\Vert b_2\Vert_\infty+\Vert b_3\Vert_\infty)\\
&=\max(2M_1+1+M_2,  2M_1+3+2\Vert b_2\Vert_\infty+\Vert b_3\Vert_\infty,2+M_2,4+2\Vert b_2\Vert_\infty+\Vert b_3\Vert_\infty) .
\end{align*}
Using the bound from below for the energy and Gr\"onwall's lemma we obtain the following estimate for $U^0$ and $U_2$:  
\[
\Vert U^0(t)\Vert_{L^2}^2+\Vert U_{2}(t)\Vert_{L^2}^2\le E(t)\le \biggl(E(0)+\int_{0}^t\Vert f(s)\Vert_{L^2}^2\, ds\biggr){\rm e}^{ct}.
\]
In addition,
\[
\begin{split}
\biggl(E(0)+\int_{0}^t\Vert f(s)\Vert_{L^2}^2\, ds\biggr){\rm e}^{ct}
&\le {\rm e}^{cT}\Vert U^0(0)\Vert_{L^2}^2+ {\rm e}^{cT}\Vert a\Vert_{\infty}\Vert U_1(0)\Vert_{L^2}^2+ {\rm e}^{cT}\Vert U_2(0)\Vert_{L^2}^2\\
& +{\rm e}^{cT}\int_{0}^t\Vert f(s)\Vert_{L^2}^2\, ds\\
&\le C_{2}\biggl(\Vert g_0\Vert_{H^1}^2+\Vert g_1\Vert_{L^2}^2+\int_{0}^t\Vert f(s)\Vert_{L^2}^2\, ds\biggr),
\end{split}
\]
for all $t\in[0,T]$. Note that the constant $C_2$ depends linearly on $\Vert a\Vert_\infty$ and exponentially on $T$, $M_2$,  $\Vert a''\Vert_\infty$ $\Vert b_2\Vert_\infty$ and $\Vert b_3\Vert_\infty$. Indeed, setting $M_1=\Vert a''\Vert_\infty$,
\beq \label{C_2_3x3}
C_2= {\rm e}^{cT}\max(\Vert a\Vert_{\infty},1)= {\rm e}^{\max(2M_1+1+M_2,  2M_1+3+2\Vert b_2\Vert_\infty+\Vert b_3\Vert_\infty,2+M_2,4+2\Vert b_2\Vert_\infty+\Vert b_3\Vert_\infty) T}\max(\Vert a\Vert_{\infty},1).
\eeq
Concluding,
\beq \label{est_U_2_3x3}
\Vert U^0(t)\Vert_{L^2}^2+\Vert U_{2}(t)\Vert_{L^2}^2\le C_{2}\biggl(\Vert g_0\Vert_{H^1}^2+\Vert g_1\Vert_{L^2}^2+\int_{0}^t\Vert f(s)\Vert_{L^2}^2\, ds\biggr).
\eeq

\subsection{$L^2$-estimates for $U_1$}
We now want to obtain a similar estimate for $U_1$. To  attain this, we transform once more the system by taking a derivative with respect to $x$. Let $V=(V^0,V_1,V_2)^T=(\partial_{x}U^0,\partial_{x}U_1, \partial_{x}U_2)^T$. By getting an estimate for $V_2$ we also automatically  get an estimate for $U_1$ since $V_2=\partial_t U_1$. Indeed, we can do so by applying the fundamental theorem of calculus and making use of the initial conditions. Hence, if $U$ solves
\[
\begin{split}
\partial_t U=&A \partial_x U+BU+F,\\
U(0,x)&=(g_0,g'_0,g_1)^{T},
\end{split}
\]
then $V$ solves
\[
\begin{split}
\partial_t V=& A\partial_x V+(A'+ B)V +\tilde{F},\\
V(0,x)&=(g'_0, g_0'',g_1')^{T},\quad \text{ where } \tilde{F} =B'U+F_x. 
\end{split}
\]
The system in $V$  still has $A$ as a principal part matrix and additional lower order terms. It follows that we can still use the symmetriser $Q$ to define the energy
\[
E(t)=(QV,V)_{L^2}=\Vert V^0\Vert^2_{L^2}+(aV_1,V_1)_{L^2}+\Vert V_2\Vert^2_{L^2},
\]
for which we obtain the bound from below $ \Vert V^0\Vert^2_{L^2}+\Vert V_2\Vert^2_{L^2} \le E(t)$. Therefore,
 \[
 \begin{split}
 &\frac{dE(t)}{dt}=(\partial_t(QV),V)_{L^2}+(QV,\partial_tV)_{L^2}\\
 &=(Q\partial_tV,V)_{L^2}+(QV, A\partial_x V+(A'+ B)V )_{L^2}+(QV,\tilde{F})_{L^2}\\
 &=(QA\partial_x V+Q(A'+ B)V +Q\tilde{F}, V)_{L^2}+(QV, A\partial_{x}V+(A'+B)V)_{L^2}+(QV,\tilde{F})_{L^2}\\
 &= (QA\partial_{x}V,V)_{L^2}+(A^\ast QV,\partial_{x}V)_{L^2}+(Q(A'+B)V, V)_{L^2}+(QV, (A'+B)V)_{L^2}+2(QV,\tilde{F})_{L^2}\\
 &=(QA\partial_{x}V,V)_{L^2}+(QAV,\partial_{x}V)_{L^2}+2(Q(A'+B)V, V)_{L^2}+2(QV,\tilde{F})_{L^2}\\
 &=(QA\partial_{x}V,V)_{L^2}-(\partial_{x}(QAV),V)_{L^2}+2(Q(A'+B)V, V)_{L^2}+2(QV,\tilde{F})_{L^2}\\
 &=-((QA)'V,V)_{L^2}+ 2(Q(A'+B)V, V)_{L^2}+2(QV,\tilde{F})_{L^2}.
\end{split}
 \]
By direct computations
\begin{align*}
 ((QA)'V,V)_{L^2}&=2(a'V_1,V_2)_{L^2}\\
 2(Q(A'+B)V, V)_{L^2}&=2((1-b_3)V_2,V^0)_{L^2}+2(a'V_1,V_2)_{L^2}-2(b_1V_1,V_2)_{L^2}-2(b_2V_2,V_2)_{L^2}\\
 2(QV,\tilde{F})_{L^2}&=-2(b'_3U^0,V_2)_{L^2}-2(b'_1U_1,V_2)_{L^2}-2(b'_2U_2,V_2)_{L^2}+2(V_2,f_x)_{L^2}.
\end{align*}
 Hence,
 \begin{align*}
 \frac{dE(t)}{dt}=&2((1-b_3)V_2,V^0)_{L^2}+2(V_2,f_x)_{L^2}-2(b_1V_1,V_2)_{L^2}-2(b_2V_2,V_2)_{L^2}\\
 &-2(b'_3U^0,V_2)_{L^2}-2(b'_1U_1,V_2)_{L^2}-2(b'_2U_2,V_2)_{L^2} .
 \end{align*}
 Now,
\begin{align*}
&|2((1-b_3)V_2,V^0)_{L^2} | \le 2(1+\Vert b_3 \Vert_\infty )( \Vert V_2\Vert_{L^2}^2+ \Vert V^0\Vert_{L^2}^2)\le  2(1+\Vert b_3 \Vert_\infty ) E(t)\\
&|2(V_2,f_x)_{L^2}|\le \Vert V_2\Vert_{L^2}^2+ \Vert f_x\Vert_{L^2}^2\le E(t)+\Vert f_x\Vert^2_{L^2}\\
&|2(b_1V_1,V_2)_{L^2}|\le \Vert b_1 V_1\Vert_{L^2}^2+ \Vert V_2\Vert_{L^2}^2 \le M_2(aV_1,V_1)_{L^2}+\Vert V_2\Vert^2_{L^2}\le \max(M_2,1) E(t),\\
&|2(b_2V_2,V_2)_{L^2}| \le 2\Vert b_2 \Vert_\infty \Vert V_2\Vert_{L^2}^2 \le  2\Vert b_2 \Vert_\infty E(t)\\
&|2(b'_1U_1,V_2)_{L^2}| \le \Vert b'_1 \Vert^2_\infty \Vert U_1\Vert_{L^2}^2+ \Vert V_2\Vert_{L^2}^2 \le  \Vert b'_1 \Vert^2_\infty \Vert U_1\Vert_{L^2}^2+ E(t)
\end{align*}
and using \eqref{est_U_2_3x3},
\begin{align*}
&|2(b'_3U^0,V_2)_{L^2}| \le \Vert b'_3 \Vert^2_\infty \Vert U^0\Vert_{L^2}^2+ \Vert V_2\Vert_{L^2}^2\le C_{2} \Vert b'_3 \Vert^2_\infty \biggl(\Vert g_0\Vert_{H^1}^2+\Vert g_1\Vert_{L^2}^2+\int_{0}^t\Vert f(s)\Vert_{L^2}^2\, ds\biggr) +E(t)\\
&|2(b'_2U_2,V_2)_{L^2}| \le \Vert b'_2 \Vert^2_\infty \Vert U_2\Vert_{L^2}^2+ \Vert V_2\Vert_{L^2}^2\le C_{2} \Vert b'_2 \Vert^2_\infty \biggl(\Vert g_0\Vert_{H^1}^2+\Vert g_1\Vert_{L^2}^2+\int_{0}^t\Vert f(s)\Vert_{L^2}^2\, ds\biggr) +E(t).
\end{align*}
Therefore, 
\begin{align}\label{Energy_derivative1_3x3}
 \frac{dE(t)}{dt}\le&(6+2\Vert b_3 \Vert_\infty+2\Vert b_2 \Vert_\infty+\max(M_2,1))E(t) +\Vert f_x(t)\Vert_{L^2}^2+\Vert b'_1 \Vert^2_\infty \Vert U_1\Vert_{L^2}^2\\
 				&+C_{2} (\Vert b'_2 \Vert^2_\infty+\Vert b'_3 \Vert^2_\infty) \biggl(\Vert g_0\Vert_{H^1}^2+\Vert g_1\Vert_{L^2}^2+\int_{0}^t\Vert f(s)\Vert_{L^2}^2\, ds\biggr). \nonumber
\end{align}

Now we note that $V_2=\partial_x U_2=\partial_x \partial_t u=\partial_t \partial_x u=\partial_t U_1$. By the fundamental theorem of calculus we have
\[
  \Vert U_1(t)\Vert^2_{L^2}\le 2\Vert U_1(t)-U_1(0)\Vert_{L^2}^2+2\Vert U_1(0)\Vert_{L^2}^2
 =2\Big\Vert \int_{0}^tV_{2}\, ds\Big\Vert_{L^2}^2+2\Vert U_1(0)\Vert_{L^2}^2.\\
 \]
  By Minkowski's integral inequality
  \[
  \Big\Vert \int_{0}^tV_{2}\, ds\Big\Vert_{L^2}\le \int_{0}^t \Vert V_2(s)\Vert_{L^2}ds
  \]
  and therefore by applying Hölder's inequality on the integral in $ds$ we get
  \begin{align}\label{U_1_3x3}
    \Vert U_1(t)\Vert^2_{L^2}&\le 2\biggl(\int_{0}^t \Vert V_{2}(s)\Vert_{L^2}ds\biggr)^2+2\Vert U_1(0)\Vert_{L^2}^2\le 2t \int_{0}^t \Vert V_{2}(s)\Vert^2_{L^2}ds+2\Vert U_1(0)\Vert_{L^2}^2\\
    &\le 2T \int_{0}^t E(s)ds+2\Vert U_1(0)\Vert_{L^2}^2. \nonumber
  \end{align}
 Hence, estimate \eqref{Energy_derivative1_3x3} becomes
\beq\label{Energy_derivative2_3x3}
\begin{split}
&\frac{dE(t)}{dt}\le(6+2\Vert b_3 \Vert_\infty+2\Vert b_2 \Vert_\infty+\max(M_2,1))E(t) +2T \Vert b'_1 \Vert^2_\infty  \int_{0}^t E(s)ds+\Vert f_x(t)\Vert_{L^2}^2\\
&+2\Vert b'_1 \Vert^2_\infty\Vert U_1(0)\Vert_{L^2}^2 +C_{2} (\Vert b'_2 \Vert^2_\infty+\Vert b'_3 \Vert^2_\infty) \biggl(\Vert g_0\Vert_{H^1}^2+\Vert g_1\Vert_{L^2}^2+\int_{0}^t\Vert f(s)\Vert_{L^2}^2\, ds\biggr).\nonumber 	
 \end{split}			
\eeq
For the sake of the reader we now recall a Gr\"onwall's type lemma (Lemma 6.2 in \cite{ST}) that will be applied to the inequality \eqref{Energy_derivative2_3x3} in order to estimate the energy.
\begin{lemma}
\label{lem_ST_G_3x3}
Let $\varphi\in C^1([0,T])$ and $\psi\in C([0,T])$ two positive functions such that 
\[
\varphi'(t)\le B_1\varphi(t)+B_2\int_0^t\varphi(s)\, ds+\psi(t),\qquad t\in[0,T],
\]
for some constants $B_1,B_2>0$. Then, there exists a constant $B>0$ depending exponentially on $B_1, B_2$ and $T$ such that 
\[
\varphi(t)\le B\biggl(\varphi(0)+\int_0^t\psi(s)\, ds\biggl),
\]
for all $t\in[0,T]$.
\end{lemma}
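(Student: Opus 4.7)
The plan is to reduce the mixed integro-differential inequality to a classical linear Grönwall inequality by integrating once in $t$ and absorbing the inner integral into the outer one via Fubini. I would not attempt a direct integrating-factor argument, since the presence of the non-local term $B_2\int_0^t\varphi(s)\,ds$ on the right-hand side makes that awkward; integrating first collapses everything into a single integral of $\varphi$ against a constant coefficient.

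First I would integrate the hypothesis from $0$ to $t$, which yields
\[
\varphi(t)\le \varphi(0)+B_1\int_0^t\varphi(s)\,ds+B_2\int_0^t\!\!\int_0^s\varphi(\tau)\,d\tau\,ds+\int_0^t\psi(s)\,ds.
\]
For the iterated integral I would apply Fubini (legitimate because $\varphi\ge 0$ is continuous on $[0,T]$) to obtain
\[
\int_0^t\!\!\int_0^s\varphi(\tau)\,d\tau\,ds=\int_0^t(t-\tau)\varphi(\tau)\,d\tau\le T\int_0^t\varphi(\tau)\,d\tau.
\]
Plugging this in gives the purely integral inequality
\[
\varphi(t)\le \alpha(t)+(B_1+B_2T)\int_0^t\varphi(s)\,ds,\qquad \alpha(t):=\varphi(0)+\int_0^t\psi(s)\,ds.
\]

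Second, I would invoke the standard linear Grönwall lemma. Since $\psi\ge 0$, the function $\alpha$ is nondecreasing on $[0,T]$, so the classical statement of Grönwall yields
\[
\varphi(t)\le \alpha(t)\,e^{(B_1+B_2T)t}\le e^{(B_1+B_2T)T}\left(\varphi(0)+\int_0^t\psi(s)\,ds\right),
\]
for every $t\in[0,T]$. Setting $B:=e^{(B_1+B_2T)T}$ gives the claim, and by construction $B$ depends exponentially on $B_1$, $B_2$ and $T$, as required.

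There is no real obstacle here; the one step that deserves a moment of attention is the Fubini swap, which is what produces the extra factor of $T$ in the exponent and is the mechanism by which the non-local term $B_2\int_0^t\varphi$ ends up contributing to the constant $B$ only through the product $B_2 T$. All remaining manipulations are routine, and the positivity hypotheses on $\varphi$, $\psi$ and on the constants $B_1,B_2$ are used precisely to guarantee the monotonicity of $\alpha$ and the validity of the elementary bound $(t-\tau)\le T$.
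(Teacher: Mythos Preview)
Your argument is correct. Integrating the hypothesis once, applying Fubini to collapse the double integral (which produces the factor $T$), and then invoking the standard integral Gr\"onwall lemma with the nondecreasing function $\alpha(t)=\varphi(0)+\int_0^t\psi(s)\,ds$ yields exactly the conclusion with $B=e^{(B_1+B_2T)T}$.

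There is nothing to compare against: the paper does not prove this lemma but simply quotes it as Lemma~6.2 in \cite{ST}. Your proof is the natural one and would be the expected justification.
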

Hence, combining the bound from below for the energy $E(t)$ with Lemma \eqref{lem_ST_G_3x3}, we obtain that there exists a constant $C_3>0$ depending exponentially on $\Vert b_2 \Vert_\infty$, $\Vert b_3 \Vert_\infty$, $\Vert b'_1 \Vert^2_\infty$, $M_2$ and $T$ such that
\begin{align}
\label{est_V_2_3x3}
&\Vert V^0(t)\Vert_{L^2}^2+\Vert V_{2}(t)\Vert_{L^2}^2 \le E(t)\le C_3\biggl(E(0)+\int_0^t \Vert f_x(s)\Vert_{L^2}^2+ C_{2} (\Vert b'_2 \Vert^2_\infty+\Vert b'_3 \Vert^2_\infty) \int_{0}^s\Vert f(r)\Vert_{L^2}^2\, dr\, ds \\
&+2T\Vert b'_1 \Vert^2_\infty \Vert U_1(0)\Vert_{L^2}^2+C_{2}T(\Vert b'_2 \Vert^2_\infty+\Vert b'_3 \Vert^2_\infty)\left(\Vert g_0\Vert_{H^1}^2+\Vert g_1\Vert_{L^2}^2\right)\biggl) \nonumber \\
\le& C_3\biggl(\Vert V^0(0)\Vert_{L^2}^2+ \Vert a\Vert_\infty \Vert V_1(0)\Vert_{L^2}^2+\Vert V_2(0)\Vert_{L^2}^2
+ C_{2}T (\Vert b'_2 \Vert^2_\infty+\Vert b'_3 \Vert^2_\infty) \int_{0}^t\Vert f(s)\Vert_{L^2}^2\, ds\nonumber\\
&+\int_0^t \Vert f_x(s)\Vert_{L^2}^2\, ds+2T\Vert b'_1 \Vert^2_\infty \Vert g_1\Vert_{L^2}^2+C_{2}T (\Vert b'_2 \Vert^2_\infty+\Vert b'_3 \Vert^2_\infty) \left(\Vert g_0\Vert_{H^1}^2+\Vert g_1\Vert_{L^2}^2\right)\biggl)  \nonumber\\
\le& C_3\biggl( \Vert g_0\Vert_{H^1}^2+\Vert a\Vert_\infty \Vert g_0\Vert_{H^2}^2+\Vert g_1\Vert_{H^1}^2+ \max( C_{2}T (\Vert b'_2 \Vert^2_\infty+\Vert b'_3 \Vert^2_\infty) ,1) \int_{0}^t\Vert f(s)\Vert_{H^1}^2\, ds\nonumber\\
&+2T\Vert b'_1 \Vert^2_\infty\Vert g_1\Vert_{L^2}^2+C_{2}T(\Vert b'_2 \Vert^2_\infty+\Vert b'_3 \Vert^2_\infty) \left(\Vert g_0\Vert_{H^1}^2+\Vert g_1\Vert_{L^2}^2\right)\biggl) \nonumber\\
\le& C_3 \max(\Vert a\Vert_\infty,  C_{2} T\Vert b'_2 \Vert^2_\infty, C_{2} T\Vert b'_3 \Vert^2_\infty,1,2T\Vert b'_1 \Vert^2_\infty)\biggl(\Vert g_0\Vert_{H^2}^2+\Vert g_1\Vert_{H^1}^2+\int_{0}^t\Vert f(s)\Vert_{H^1}^2 \, ds\biggl),\nonumber
\end{align}
for all $t\in[0,T]$, with $C_2$  as in \eqref{C_2_3x3}. Noting that $V^0=U_1$, we have that
\begin{align} \label{est_U_1_3x3}
\Vert U_1(t)\Vert^2_{L^2}\le&   C_3 \max(\Vert a\Vert_\infty,  C_{2} T\Vert b'_2 \Vert^2_\infty, C_{2} T\Vert b'_3 \Vert^2_\infty,1,2T\Vert b'_1 \Vert^2_\infty) \biggl(\Vert g_0\Vert_{H^2}^2+\Vert g_1\Vert_{H^1}^2+\int_{0}^t\Vert f(s)\Vert_{H^1}^2 \, ds\biggl).
\end{align}

\subsubsection{System in W}
Analogously, if we want to estimate the $L^2$-norm of $V_1$ we need to repeat the same procedure, i.e.,  to derive the system in $V$ with respect to $x$ and introduce $W=(W^0,W_1,W_2)^T=(\partial_{x}V^0,\partial_{x}V_1, \partial_{x}V_2)^T$. We have that if $V$ solves 
\[
\begin{split}
\partial_t V=& A\partial_x V+(A'+ B)V +\tilde{F},\\
V(0,x)&=(g'_0,g_0'',g_1')^{T},\quad \text{ where } \tilde{F} =B'U+F_x, 
\end{split}
\]
then $W$ solves
\[
\begin{split}
\partial_t W=& A \partial_x W+(2A'+B)W+\tilde{\tilde{F}},\\
W(0,x)&=(g_0',g_0''',g_1'')^{T},\quad \text{ where } \tilde{\tilde{F}} =(A''+2B')V+B''U+F_{xx}.
\end{split}
\]
Again, by using the energy $E(t)=(QW, W)_{L^2}=\Vert W^0\Vert^2_{L^2}+(aW_1,W_1)_{L^2}+\Vert W_2\Vert^2_{L^2}$ we have
 \[
 \begin{split}
 &\frac{dE(t)}{dt}=(\partial_t(QW),W)_{L^2}+(QW\partial_tW)_{L^2}\\
 &=(Q\partial_tW,W)_{L^2}+(QW,A \partial_x W+(2A'+B)W)_{L^2}+(QW,\tilde{\tilde{F}})_{L^2}\\
 &=(QA \partial_x W+Q(2A'+B)W+Q\tilde{\tilde{F}}, W)_{L^2}+(QW,A \partial_x W+(2A'+B)W)_{L^2}+(QW,\tilde{\tilde{F}})_{L^2}\\
 &=(QA \partial_x W, W)_{L^2}+(Q(2A'+B)W, W)_{L^2}+(A^*QW, \partial_x W)_{L^2}+(QW,(2A'+B)W)_{L^2}+2(QW,\tilde{\tilde{F}})_{L^2}\\
 &=(QA \partial_x W, W)_{L^2}+(QAW, \partial_x W)_{L^2}+2(Q(2A'+B)W, W)_{L^2}+2(QW,\tilde{\tilde{F}})_{L^2}\\
 &=(QA \partial_x W, W)_{L^2}-(\partial_x(QAW), W)_{L^2}+2(Q(2A'+B)W, W)_{L^2}+2(QW,\tilde{\tilde{F}})_{L^2}\\
 &=-((QA)'W,W)_{L^2}+2(Q(2A'+B)W, W)_{L^2}+2(QW,\tilde{\tilde{F}})_{L^2}.
\end{split}
 \]
By direct computations we get
\begin{align*}
 ((QA)'W,W)_{L^2}=&2(a'W_1,W_2)_{L^2}\\
 2(Q(2A'+B)W, W)_{L^2}=&2((1-b_3)W^0,W_2)_{L^2}+4(a'W_1,W_2)_{L^2}-2(b_1W_1,W_2)_{L^2}-2(b_2W_2,W_2)_{L^2}\\
 2(QW,\tilde{F})_{L^2}=&2(a''W_2,V_1)_{L^2}-4(b'_3W_2,V^0)_{L^2}-4(b'_1W_2,V_1)_{L^2}-4(b'_2W_2,V_2)_{L^2}\\
 &-2(b''_3W_2,U^0)_{L^2}-2(b''_1W_2,U_1)_{L^2}-2(b''_2W_2,U_2)_{L^2}+2(W_2,f_{xx})_{L^2}.
\end{align*}
 Hence,
 \begin{align*}
 \frac{dE(t)}{dt}=&2(a'W_1,W_2)_{L^2}+2((1-b_3)W_2,W^0)_{L^2}-2(b_1W_1,W_2)_{L^2}-2(b_2W_2,W_2)_{L^2}\\
 &+2(a''W_2,V_1)_{L^2}-4(b'_1W_2,V_1)_{L^2}-4(b'_2W_2,V_2)_{L^2}-2(b''_3W_2,U^0)_{L^2}\\
 &-2(b''_1W_2,U_1)_{L^2}-2(b''_2W_2,U_2)_{L^2}+2(W_2,f_{xx})_{L^2}.
 \end{align*}
 Now,
\begin{align*}
&|2(a'W_1,W_2)_{L^2}|\le 2\Vert a'W_1\Vert_{L^2}\Vert W_2\Vert_{L^2}\le 2M_1(aW_1,W_1)_{L^2}+\Vert W_2\Vert_{L^2}^2\le \max(2M_1,1)E(t)\\
&|2((1-b_3)W^0,W_2)_{L^2}| \le (1+\Vert b_3\Vert_\infty) (\Vert W^0\Vert^2_{L^2}+\Vert W_2\Vert_{L^2}^2)\le  (1+\Vert b_3\Vert_\infty)E(t)\\
&|2(b_1W_1,W_2)_{L^2}|\le 2\Vert b_1W_1\Vert_{L^2}\Vert W_2\Vert_{L^2}\le M_2(aW_1,W_1)_{L^2}+\Vert W_2\Vert_{L^2}^2\le \max(M_2,1)E(t) \\
&|2(b_2W_2,W_2)_{L^2}| \le 2\Vert b_2 \Vert_\infty \Vert W_2\Vert_{L^2}^2 \le  2\Vert b_2 \Vert_\infty E(t)\\
&|2(a''W_2,V_1)_{L^2}| \le \Vert a''\Vert_\infty^2 \Vert W_2\Vert_{L^2}^2+\Vert V_1\Vert^2_{L^2}\le \Vert a''\Vert_\infty^2 E(t)+\Vert V_1\Vert^2_{L^2}\\
&|4(b'_3W_2,V^0)_{L^2}| \le  2 \Vert b'_3 \Vert^2_\infty E(t) + 2\Vert V^0\Vert_{L^2}^2  \\
&|4(b'_1W_2,V_1)_{L^2}| \le 2\Vert b'_1 \Vert^2_\infty \Vert W_2\Vert_{L^2}^2+ 2\Vert V_1\Vert_{L^2}^2 \le 2 \Vert b'_1 \Vert^2_\infty E(t) + 2\Vert V_1\Vert_{L^2}^2  \\
&|4(b'_2W_2,V_2)_{L^2}| \le 2 \Vert b'_2 \Vert^2_\infty E(t) + 2\Vert V_2\Vert_{L^2}^2  \\
&|2(b''_3W_2,U^0)_{L^2}| \le \Vert b''_3\Vert_\infty^2 \Vert W_2\Vert_{L^2}^2+\Vert U^0\Vert^2_{L^2} \le \Vert b''_3\Vert_\infty^2 E(t)+\Vert U^0\Vert^2_{L^2}\\
&|2(b''_1W_2,U_1)_{L^2}| \le \Vert b''_1\Vert_\infty^2 E(t)+\Vert U_1\Vert^2_{L^2} \\
&|2(b''_2W_2,U_2)_{L^2}| \le \Vert b''_2\Vert_\infty^2 E(t)+\Vert U_2\Vert^2_{L^2} \\
&|2(W_2,f_{xx})_{L^2}|\le \Vert W_2\Vert_{L^2}^2+ \Vert f_{xx}\Vert_{L^2}^2\le E(t)+\Vert f_{xx}\Vert_{L^2}^2.\\
\end{align*}
Therefore,
 \begin{align}
 \label{est_E_W_1_3x3}
 \frac{dE(t)}{dt}\le &3\Vert V_1\Vert_{L^2}^2+2\Vert V^0\Vert_{L^2}^2+2\Vert V_2\Vert_{L^2}^2+\Vert U^0\Vert^2_{L^2}+ \Vert U_1\Vert_{L^2}^2+ \Vert U_2\Vert_{L^2}^2 + \Vert f_{xx}\Vert^2_{L^2}\\ 
 &+\max(\Vert a''\Vert_\infty ,M_2,(1+\Vert b_3\Vert_\infty),2\Vert b_2 \Vert_\infty, \Vert a''\Vert_\infty^2 ,2 \Vert b'_1 \Vert^2_\infty ,\nonumber\\ 
 & \qquad  \qquad 2 \Vert b'_2 \Vert^2_\infty ,2 \Vert b'_3 \Vert^2_\infty , \Vert b''_1\Vert_\infty^2,\Vert b''_2\Vert_\infty^2,\Vert b''_3\Vert_\infty^2 )E(t). \nonumber
 \end{align}
 
From estimates \eqref{est_U_2_3x3}, \eqref{est_V_2_3x3}, and \eqref{est_U_1_3x3}, we get
\begin{align}\label{est_E_W_1_intermidiate1_3x3}
2\Vert V^0\Vert_{L^2}^2+2\Vert V_2\Vert_{L^2}^2+ \Vert U^0\Vert_{L^2}^2+ \Vert U_1\Vert_{L^2}^2+ \Vert U_2\Vert_{L^2}^2 \le C_4 \biggl(\Vert g_0\Vert_{H^2}^2+\Vert g_1\Vert_{H^1}^2+\int_{0}^t\Vert f(s)\Vert_{H^1}^2 \, ds\biggl)
\end{align}
where $C_4>0$ depends linearly on $T$, $\Vert a \Vert_\infty$, $\Vert b'_1 \Vert^2_\infty$, $\Vert b'_2 \Vert^2_\infty$,  $\Vert b'_3 \Vert^2_\infty$ and exponentially on $T$, $\Vert b_2 \Vert_\infty$, $\Vert b_3 \Vert_\infty$, $\Vert b'_1 \Vert^2_\infty$, $M_2$, $\Vert a''\Vert_\infty$. It remains to estimate $3\Vert V_1\Vert_{L^2}^2$. Since $\partial_t V_1=W_2$ we can write
\begin{align}
 \label{est_E_W_1_intermidiate2_3x3}
3\Vert V_1\Vert^2_{L^2}&=3\biggl\Vert \int_0^t \partial_tV_1(s)\, ds+V_1(0)\biggr\Vert^2\le 6\biggl\Vert \int_0^t \partial_tV_1(s)\, ds\biggr\Vert^2_{L^2}+6\Vert V_1(0)\Vert_{L^2}^2\\
&\le 6\biggl(\int_0^t\Vert W_2(s)\Vert_{L^2}\, ds\biggr)^2+6\Vert V_1(0)\Vert_{L^2}^2
\le 6t\int_0^t\Vert W_2(s)\Vert^2_{L^2}\, ds+6\Vert V_1(0)\Vert_{L^2}^2 \nonumber\\
&\le 6T\int_0^t E(s)\, ds+6\Vert g_0\Vert_{H^2}^2. \nonumber
\end{align}
Combining \eqref{est_E_W_1_intermidiate1_3x3} with \eqref{est_E_W_1_intermidiate2_3x3}, we rewrite \eqref{est_E_W_1_3x3} as
 \begin{align}
 \label{est_E_W_2_3x3}
 \frac{dE(t)}{dt}\le& 6T\int_0^t E(s)\, ds+ \max(6,C_4) \biggl(\Vert g_0\Vert_{H^2}^2+\Vert g_1\Vert_{H^1}^2+\int_{0}^t\Vert f(s)\Vert_{H^1}^2 \, ds\biggl)   + \Vert f_{xx}\Vert^2_{L^2}\nonumber \\
 &+\max(\Vert a''\Vert_\infty ,M_2,(1+\Vert b_3\Vert_\infty),2\Vert b_2 \Vert_\infty, \Vert a''\Vert_\infty^2 ,2 \Vert b'_1 \Vert^2_\infty ,\nonumber\\ 
 & \qquad  \qquad 2 \Vert b'_2 \Vert^2_\infty ,2 \Vert b'_3 \Vert^2_\infty , \Vert b''_1\Vert_\infty^2,\Vert b''_2\Vert_\infty^2,\Vert b''_3\Vert_\infty^2 )E(t). \nonumber \nonumber
 \end{align}

By Lemma \ref{lem_ST_G_3x3}, we conclude that there exists a constant $C_5>0$, depending exponentially on $T$, $\Vert a''\Vert_\infty$, $M_2$, $\Vert b_2 \Vert_\infty$, $\Vert b_3 \Vert_\infty$, $\Vert a''\Vert_\infty^2$, $\Vert b'_1 \Vert^2_\infty$, $\Vert b'_2 \Vert^2_\infty$, $\Vert b'_3 \Vert^2_\infty$, $\Vert b''_1\Vert_\infty^2$, $\Vert b''_2\Vert_\infty^2$ and $\Vert b''_3\Vert_\infty^2$, such that 
\begin{align}
&\Vert W^0\Vert_{L^2}^2+\Vert W_2\Vert_{L^2}^2\le E(t)\nonumber\\
&\le C_5 \biggl( E(0) + \int_{0}^t   \max(6,C_4) \biggl(\Vert g_0\Vert_{H^2}^2+\Vert g_1\Vert_{H^1}^2+\int_{0}^s\Vert f(r)\Vert_{H^1}^2 \, dr\biggl)+ \Vert f_{xx}(s)\Vert^2_{L^2}  \,ds \biggr) \nonumber\\
&\le C_5\max(6,C_4)  \biggl( E(0) + \int_{0}^t\Vert f(s)\Vert_{H^1}^2 \, ds+  \int_{0}^t   \Vert f_{xx}(s)\Vert^2_{L^2}  \,ds +T(\Vert g_0\Vert_{H^2}^2+\Vert g_1\Vert_{H^1}^2)\biggr) \nonumber\\
&\le C_5\max(6,C_4)  \biggl( E(0) + \int_{0}^t\Vert f(s)\Vert_{H^2}^2 \, ds +T\left(\Vert g_0\Vert_{H^2}^2+\Vert g_1\Vert_{H^1}^2\right)\biggr) \nonumber\\
&\le C_5\max(6,C_4)  \biggl(\Vert W^0(0)\Vert_{L^2}^2 + \Vert a\Vert_\infty \Vert W_1(0)\Vert_{L^2}^2+\Vert W_2(0)\Vert_{L^2}^2+\nonumber\\
& \int_{0}^t\Vert f(s)\Vert_{H^2}^2 \, ds +T\left(\Vert g_0\Vert_{H^2}^2+\Vert g_1\Vert_{H^1}^2\right)\biggr) \nonumber\\
&\le C_5\max(6,C_4)  \biggl(\Vert g_0\Vert_{H^2}^2 +\Vert a\Vert_\infty \Vert g_0\Vert_{H^3}^2+\Vert g_1\Vert_{H^2}^2+ \int_{0}^t\Vert f(s)\Vert_{H^2}^2 \, ds +T\left(\Vert g_0\Vert_{H^2}^2+\Vert g_1\Vert_{H^1}^2\right)\biggr) \nonumber\\
&\le C_5\max(6,C_4,T,\Vert a\Vert_\infty )\biggl(\Vert g_0\Vert_{H^3}^2+\Vert g_1\Vert_{H^2}^2+\int_{0}^t \Vert f(s)\Vert_{H^2}^2 \, ds\biggr)\nonumber\\
&= C'_5\biggl(\Vert g_0\Vert_{H^3}^2+\Vert g_1\Vert_{H^2}^2+\int_{0}^t \Vert f(s)\Vert_{H^2}^2 \, ds\biggr)\nonumber.
\end{align}
Note that the constant $C'_5$ depends linearly on $T$, $\Vert a \Vert_\infty$, $\Vert b'_1 \Vert^2_\infty$,  $\Vert b'_2 \Vert^2_\infty$,  $\Vert b'_3 \Vert^2_\infty$ and exponentially on $T$, $\Vert a''\Vert_\infty$, $M_2$, $\Vert b_2 \Vert_\infty$, $\Vert b_3 \Vert_\infty$, $\Vert a''\Vert_\infty^2$, $\Vert b'_1 \Vert^2_\infty$, $\Vert b'_2 \Vert^2_\infty$, $\Vert b'_3 \Vert^2_\infty$, $\Vert b''_1\Vert_\infty^2$, $\Vert b''_2\Vert_\infty^2$ and $\Vert b''_3\Vert_\infty^2$.  Noting that $W^0= V_1$, we have that
\beq
\label{est_V_1_3x3}
\Vert V_1(t)\Vert^2_{L^2}\le C'_5\biggl(\Vert g_0\Vert_{H^3}^2+\Vert g_1\Vert_{H^2}^2+\int_{0}^t \Vert f(s)\Vert_{H^2}^2 \, ds\biggr). 
\eeq

\subsection{Sobolev estimates} Bringing everything together, we have proven that if $U$ is a solution of the Cauchy problem 
\[
\begin{split}
\partial_t U=&A \partial_x U+F,\\
U(0,x)&=(g_0,g'_0,g_1)^{T},
\end{split}
\]
then $\Vert U_1(t)\Vert^2_{L^2}$ is bounded by 
\[
C_3 \max(\Vert a\Vert_\infty,  C_{2} T\Vert b'_2 \Vert^2_\infty, C_{2} T\Vert b'_3 \Vert^2_\infty,1,2T\Vert b'_1 \Vert^2_\infty)\biggl(\Vert g_0\Vert_{H^2}^2+\Vert g_1\Vert_{H^1}^2+\int_{0}^t\Vert f(s)\Vert_{H^1}^2 \, ds\biggl)
\]
and 
\[
\Vert U^0(t)\Vert_{L^2}^2+\Vert U_{2}(t)\Vert_{L^2}^2\le C_{2}\biggl(\Vert g_0\Vert_{H^1}^2+\Vert g_1\Vert_{L^2}^2+\int_{0}^t\Vert f(s)\Vert_{L^2}^2\, ds\biggr).
\]
It follows that 
\[
\Vert U(t)\Vert_{L^2}^2\le A_0\biggl(\Vert g_0\Vert_{H^2}^2+\Vert g_1\Vert_{H^1}^2+\int_{0}^t\Vert f(s)\Vert_{H^1}^2\, ds\biggr)
\]
where $A_0$ depends linearly on $T$, $\Vert a\Vert_\infty$, $\Vert b'_1\Vert^2_\infty$,  $\Vert b'_2\Vert^2_\infty$, $\Vert b'_3\Vert^2_\infty$ and exponentially on $T$, $M_2$, $\Vert a''\Vert_\infty$, $\Vert b'_1 \Vert^2_\infty$, $\Vert b_2\Vert_\infty$, $\Vert b_3\Vert_\infty$. 

Passing now to the system in $V$ we have proven that 
\[
\begin{split}
\Vert V_1(t)\Vert_{L^2}^2&\le C'_5\biggl(\Vert g_0\Vert_{H^3}^2+\Vert g_1\Vert_{H^2}^2+\int_{0}^t \Vert f(s)\Vert_{H^2}^2 \, ds\biggr),\\
\Vert V^0(t)\Vert_{L^2}^2+\Vert V_{2}(t)\Vert_{L^2}^2&\le  C_3 \max(\Vert a\Vert_\infty,  C_{2} T\Vert b'_2 \Vert^2_\infty, C_{2} T\Vert b'_3 \Vert^2_\infty,1,2T\Vert b'_1 \Vert^2_\infty)\\  &\quad \times\biggl(\Vert g_0\Vert_{H^2}^2+\Vert g_1\Vert_{H^1}^2+\int_{0}^t\Vert f(s)\Vert_{H^1}^2 \, ds\biggl),\\
\end{split}
\]
and therefore there exists a constant $A_1>0$, depending linearly on $T$,  $\Vert a\Vert_\infty$, $\Vert b'_1 \Vert^2_\infty$, $\Vert b'_2 \Vert^2_\infty$, $\Vert b'_3 \Vert^2_\infty$ and  exponentially on $T$, $M_2$, $\Vert a''\Vert_\infty$, $\Vert a''\Vert_\infty^2$, $\Vert b_2 \Vert_\infty$, $\Vert b_3 \Vert_\infty$,   $\Vert b'_1 \Vert^2_\infty$, $\Vert b'_2 \Vert^2_\infty$, $\Vert b'_3 \Vert^2_\infty$, $\Vert b''_1\Vert_\infty^2$,   $\Vert b''_2\Vert_\infty^2$, $\Vert b''_3\Vert_\infty^2$,  such that
\[
\Vert U(t)\Vert_{H^1}^2\le A_1\biggl(\Vert g_0\Vert_{H^3}^2+\Vert g_1\Vert_{H^2}^2+\int_{0}^t\Vert f(s)\Vert_{H^2}^2\, ds\biggr).
\]
This immediately gives the estimates
\[
\Vert u(t)\Vert_{H^{k+1}}^2\le A_k\biggl(\Vert g_0\Vert_{H^{k+2}}^2+\Vert g_1\Vert_{H^{k+1}}^2+\int_0^t \Vert f(s)\Vert_{H^{k+1}}^2\, ds\biggl),
\]
for all $t\in[0,T]$ and $k=-1,0,1$, where $A_k$ depends  linearly on $T^{\frac{(k+1)(2-k)}{2}}$,  $\Vert a\Vert_\infty$, $\Vert b'_1 \Vert^{(k+1)(2-k)}_\infty$, $\Vert b'_2 \Vert^{(k+1)(2-k)}_\infty$, $\Vert b'_3 \Vert^{(k+1)(2-k)}_\infty$ and  exponentially on $T$, $M_2$, $\Vert a''\Vert_\infty$, $\Vert a''\Vert_\infty^{k(k+1)}$,  $\Vert b'_1 \Vert^2_\infty$, $\Vert b''_1\Vert_\infty^{k(k+1)}$, $\Vert b_2 \Vert_\infty$, $\Vert b'_2 \Vert^{k(k+1)}_\infty$,  $\Vert b''_2\Vert_\infty^{k(k+1)}$, $\Vert b_3 \Vert_\infty$, $\Vert b'_3 \Vert^{k(k+1)}_\infty$,  $\Vert b''_3\Vert_\infty^{k(k+1)}$.

\subsection{Conclusion} We have obtained $H^k$-Sobolev well-posedness for the Cauchy problem \eqref{CP_1_ex} for $k=0,1,2$, provided that $a\ge 0$ and $a, b_1, b_2, b_3\in B^{\infty}(\R)$. We can obtain existence of the solution via a standard perturbation argument on the strictly hyperbolic case (see \cite{ST} and the proof of Theorem \ref{theo_main}) and from the above estimates we can obtain uniqueness. This argument can be iterated to obtain Sobolev estimates for every order $k$. The iteration will involve higher-order derivatives of the coefficients. More precisely, if we want to estimate the $H^k$-norm of $U(t)$ then we will derive the coefficients $a_j$ up to order $k+1$. We therefore have the following proposition.
 \begin{proposition}
 \label{prop_est_Sob_k}
 Assume that $a\ge 0$ and $a, b_1, b_2, b_3\in B^{\infty}(\R)$. Then, for all $k\in\N$ there exists a constant $A_k$ depending on $T$, $M_2$ and the $L^\infty$-norms of the derivatives of the coefficients up to order $k+1$ such that
 \beq
\label{est_U_H_k}
\Vert U(t)\Vert_{H^k}^2\le A_k\biggl(\Vert g_0\Vert_{H^{k+2}}^2+\Vert g_1\Vert_{H^{k+1}}^2+\int_0^t \Vert f(s)\Vert_{H^{k+1}}^2\, ds\biggl),
\eeq
for all $t\in[0,T]$.
 \end{proposition}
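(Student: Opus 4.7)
The plan is to proceed by induction on $k \geq 0$. The base cases $k=0$ and $k=1$ are precisely the two Sobolev estimates $\Vert U(t)\Vert_{L^2}^2 \le A_0(\ldots)$ and $\Vert U(t)\Vert_{H^1}^2 \le A_1(\ldots)$ displayed just before the statement of the proposition, so assume \eqref{est_U_H_k} at level $k-1$ and promote it to level $k$. The mechanism is the one used to pass from the $U$-system to the $V$-system and then to the $W$-system: introduce the iterated $x$-derivative $U^{(k)}=\partial_x^k U=(\partial_x^k U^0,\partial_x^k U_1,\partial_x^k U_2)^T$ and apply the Leibniz rule to $\partial_t U=A\partial_x U+BU+F$ to obtain
\[
\partial_t U^{(k)} = A\,\partial_x U^{(k)} + (kA' + B)\,U^{(k)} + R_k,
\]
where $R_k$ is an explicit linear combination of products $(\partial_x^i A)\,U^{(j)}$ and $(\partial_x^i B)\,U^{(j)}$ with $j\le k-1$, together with $\partial_x^k F$. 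Since the principal-part matrix is still $A$, the same symmetriser $Q$ furnishes an energy $E_k(t)=(QU^{(k)},U^{(k)})_{L^2}$ with the familiar lower bound $\Vert \partial_x^k U^0\Vert_{L^2}^2 + \Vert \partial_x^k U_2\Vert_{L^2}^2 \le E_k(t)$.

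The energy computation proceeds exactly as in \eqref{estimate1_3x3}--\eqref{estimate3_3x3}. After integration by parts one gets
\[
\frac{dE_k}{dt} = -((QA)'U^{(k)},U^{(k)})_{L^2} + ((Q\tilde B_k + \tilde B_k^{\ast}Q)U^{(k)},U^{(k)})_{L^2} + 2(QU^{(k)},R_k)_{L^2},
\]
with $\tilde B_k = kA'+B$. Glaeser's inequality controls the first bracket, and the Levi condition $b_1^2\le M_2 a$, combined with the positive $(aU_1^{(k)},U_1^{(k)})_{L^2}$ contribution to $E_k$, controls the $B$-part of the second; the extra $kA'$ term contributes only Glaeser-type pieces with factor $k$. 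Together these are bounded by $c_k E_k(t)$, with $c_k$ depending on $k$, $M_1$, $M_2$ and the $L^\infty$-norms of $b_2,b_3$. The coupling with $R_k$ is bounded by Cauchy--Schwarz against $E_k(t) + \Vert R_k(t)\Vert_{L^2}^2$, and the inductive hypothesis, applied to the lower-order components $U^{(j)}$ with $j\le k-1$, yields $\Vert R_k(t)\Vert_{L^2}^2 \le C(\Vert g_0\Vert_{H^{k+1}}^2 + \Vert g_1\Vert_{H^k}^2 + \int_0^t\Vert f(s)\Vert_{H^k}^2\,ds)$ with $C$ involving $L^\infty$-norms of derivatives of $a,b_1,b_2,b_3$ of order at most $k$. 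A standard Gr\"onwall argument then bounds $E_k$, and hence the good components $\Vert \partial_x^k U^0\Vert_{L^2}^2 + \Vert \partial_x^k U_2\Vert_{L^2}^2$, by the desired data norms.

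The middle component $\partial_x^k U_1$ is not controlled by $E_k$ because $(a U_1^{(k)}, U_1^{(k)})_{L^2}$ degenerates where $a=0$. To recover it I run one further differentiation, passing to the level-$(k+1)$ system, and exploit the identity $\partial_t \partial_x^k U_1 = \partial_x^{k+1} U_2$, which is a good component of $U^{(k+1)}$. As in \eqref{U_1_3x3} and \eqref{est_E_W_1_intermidiate2_3x3}, Minkowski's and H\"older's inequalities give
\[
\Vert \partial_x^k U_1(t)\Vert_{L^2}^2 \le 2T\int_0^t E_{k+1}(s)\,ds + 2\Vert \partial_x^k U_1(0)\Vert_{L^2}^2.
\]
Computing $dE_{k+1}/dt$ in the same way, its remainder $R_{k+1}$ involves $U^{(k)}$, whose bad component $\Vert \partial_x^k U_1\Vert_{L^2}^2$ is still unbounded; substituting the display above turns the $E_{k+1}$-inequality into a Volterra form
\[
\frac{dE_{k+1}}{dt} \le \alpha E_{k+1}(t) + \beta \int_0^t E_{k+1}(s)\,ds + \psi(t),
\]
to which Lemma \ref{lem_ST_G_3x3} applies. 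This bounds $E_{k+1}$, hence $\Vert \partial_x^k U_1\Vert_{L^2}^2$, by the data, with constants involving $L^\infty$-norms of derivatives of the coefficients up to order $k+1$. Combining with the previous step yields \eqref{est_U_H_k} at level $k$.

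The main obstacle is the bookkeeping of coefficient dependence through the iterated Leibniz expansions: one must verify that the Levi condition \eqref{Levi} is only invoked in its original form, for the undifferentiated factor of $b_1$ multiplying the middle component $U_1^{(k)}$, while every higher derivative $\partial_x^i b_1$ appearing in $R_k$ or $R_{k+1}$ multiplies a strictly lower-order component $U^{(j)}$ with $j\le k-1$ and may therefore be absorbed using only plain $L^\infty$-bounds. A secondary obstacle is checking that $\tilde B_k=kA'+B$ still satisfies a Levi-type inequality with constant depending only on $M_2$ and $k$, so that the recursion closes uniformly and produces an $A_k$ of the form stated.
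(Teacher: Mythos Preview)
Your proposal is correct and follows precisely the iteration the paper has in mind: the paper does not spell out the inductive step beyond the explicit $U$-, $V$-, $W$-computations, saying only that ``this argument can be iterated'' with the $H^k$-estimate requiring derivatives of the coefficients up to order $k+1$, and your write-up is a faithful and accurate expansion of that scheme. The two points you flag as obstacles are handled exactly as in the $V$- and $W$-cases: the $(3,2)$-entry $ka'-b_1$ of $\tilde B_k$ is controlled via Glaeser plus the original Levi condition (giving a constant depending on $k$, $M_1$, $M_2$), and every derivative $\partial_x^i b_1$ with $i\ge 1$ lands in $R_k$ acting on strictly lower-order components, so only $\|b_1^{(i)}\|_\infty$ is needed there.
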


\subsection{Existence and uniqueness result}
We now prove that the Cauchy problem \eqref{CP_1_ex}
\begin{equation*}
\begin{split}
\partial_t^2u-a(x)\partial^2_{x} u+ b_1(x)\partial_x u+ b_2(x)\partial_t u + b_3(x) u&=f(t,x),\quad t\in[0,T],\, x\in\R,\\
u(0,x)&=g_0,\\
\partial_tu(0,x)&=g_1,
\end{split}
\end{equation*}
is well-posed in every Sobolev space and hence in $C^\infty(\R)$. We will use the estimate \eqref{est_U_H_k} which we  re-write in terms of $u$ as
\beq
\label{est_u_H_k}
\Vert u(t)\Vert_{H^{k+1}}^2\le A_k\biggl(\Vert g_0\Vert_{H^{k+2}}^2+\Vert g_1\Vert_{H^{k+1}}^2+\int_0^t \Vert f(s)\Vert_{H^{k+1}}^2\, ds\biggl).
\eeq
\begin{theorem}
\label{theo_main}
Let  $a\ge 0$ and $a, b_1, b_2, b_3\in B^{\infty}(\R)$. Assume that $g_0,g_1 \in C_c^{\infty} (\R)$ and $f\in C([0,T], C_c^{\infty}(\R))$. Then, the Cauchy problem \eqref{CP_1_ex} is well-posed in every Sobolev space $H^k$, with $k\in\N$, and 
for all $k\in\N$ there exists a constant $C_k>0$ depending on  $T$, $M_2$ and the $L^\infty$-norms of the derivatives of the coefficients up to order $k+1$ such that
 \beq
\label{est_u_H_k_1}
\Vert u(t)\Vert_{H^{k+1}}^2\le C_k\biggl(\Vert g_0\Vert_{H^{k+2}}^2+\Vert g_1\Vert_{H^{k+1}}^2+\int_0^t \Vert f(s)\Vert_{H^{k+1}}^2\, ds\biggl),
\eeq
for all $t\in[0,T]$.
\end{theorem}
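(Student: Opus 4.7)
The estimate \eqref{est_u_H_k_1} is essentially a restatement of the conclusion of Proposition \ref{prop_est_Sob_k}, once one notes that $\Vert U(t)\Vert_{H^k}^2$ controls $\Vert u(t)\Vert_{H^{k+1}}^2$ through the components $u$, $\partial_x u$, $\partial_t u$. So the real content of the theorem is the \emph{existence} and \emph{uniqueness} of the solution; the quantitative bound is already in hand and can simply be quoted with $C_k$ taken equal to the constant $A_k$ of Proposition \ref{prop_est_Sob_k}.

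For existence, my plan is to reduce to the strictly hyperbolic case by a perturbation argument. For $\eps>0$ replace $a(x)$ by $a_\eps(x):=a(x)+\eps$; since $a_\eps\ge\eps>0$ and $a_\eps\in B^\infty(\R)$, the modified Cauchy problem is strictly hyperbolic with smooth coefficients, so classical theory (as in \cite{ST}) yields a unique solution $u_\eps\in C([0,T],H^{k+2})\cap C^1([0,T],H^{k+1})$ for every $k\in\N$. The crucial point is that the constants $M_1=\Vert a''\Vert_\infty$ and $M_2$ (the Levi constant $b_1^2\le M_2 a$) still work for $a_\eps$ without any $\eps$-dependence: $a_\eps''=a''$, and $b_1^2\le M_2 a\le M_2 a_\eps$. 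Consequently, all the energy estimates leading to Proposition \ref{prop_est_Sob_k} are \emph{uniform in} $\eps$, giving
\[
\Vert u_\eps(t)\Vert_{H^{k+1}}^2\le A_k\biggl(\Vert g_0\Vert_{H^{k+2}}^2+\Vert g_1\Vert_{H^{k+1}}^2+\int_0^t\Vert f(s)\Vert_{H^{k+1}}^2\,ds\biggr)
\]
with $A_k$ independent of $\eps$. Applying the same estimates to the difference $u_\eps-u_{\eps'}$, which solves a problem with zero data and forcing $(a_\eps-a_{\eps'})\partial_x^2 u_{\eps'}=(\eps-\eps')\partial_x^2 u_{\eps'}$, one obtains that $(u_\eps)_\eps$ is Cauchy in $C([0,T],H^{k+1})$ for each $k$. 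Passing to the limit $\eps\to 0^+$ gives a solution $u\in\bigcap_k C([0,T],H^{k+1})$, hence in $C([0,T],C^\infty(\R))$, which satisfies \eqref{CP_1_ex} in the classical sense and inherits the bound \eqref{est_u_H_k_1}.

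Uniqueness is immediate from \eqref{est_u_H_k_1}: if $u_1,u_2$ are two solutions with the same data $g_0,g_1,f$, then by linearity $w=u_1-u_2$ solves \eqref{CP_1_ex} with $g_0=g_1=0$ and $f=0$, and the estimate with $k=-1$ (or any $k\ge 0$) forces $\Vert w(t)\Vert_{H^{k+1}}=0$ for all $t\in[0,T]$, so $u_1=u_2$.

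The main obstacle is the uniformity of the constants in the approximation step: one must check carefully that every constant appearing in Section \ref{section_classical} and the subsequent subsections (in particular those involving $M_1$, $M_2$, $\Vert b_j\Vert_\infty$, and the derivatives of $a$ and $b_j$) is unaffected by the replacement $a\mapsto a+\eps$. This is built in by design of the Levi condition \eqref{Levi}, which is a condition on $a$ and $b_1$ alone and is preserved (and even improved) under the shift, and by the fact that differentiation kills the $\eps$ translation. Once this is checked, the Cauchy property of $(u_\eps)_\eps$ and the passage to the limit are routine.
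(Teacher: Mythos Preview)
Your proposal is correct and follows the same overall strategy as the paper: perturb $a$ to $a+\eps$ to obtain a strictly hyperbolic problem, observe that the constants in the energy estimates (through $M_1=\Vert a''\Vert_\infty$, the Levi constant $M_2$, and the $B^\infty$ norms of the $b_j$) are unaffected by the shift, pass to the limit, and read off uniqueness from the a~priori estimate. The difference is in the passage to the limit. The paper argues by \emph{boundedness}: the net $(U_\delta)_\delta$ is bounded in $(L^2(\R))^3$ uniformly in $\delta$, so one extracts a convergent subsequence whose limit solves the system in the sense of distributions, with the regularity details deferred to \cite{G:21}. You instead show that the full family $(u_\eps)_\eps$ is \emph{Cauchy} in $C([0,T],H^{k+1})$ for every $k$, by applying the same estimates to $u_\eps-u_{\eps'}$ with forcing $(\eps-\eps')\partial_x^2 u_{\eps'}$. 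Your route is more constructive: it gives strong convergence in every Sobolev norm, so the limit is automatically a classical $C^\infty$ solution and inherits \eqref{est_u_H_k_1} without any subsequence extraction or distributional interpretation.

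One point worth making explicit in your write-up: to bound $\Vert(\eps-\eps')\partial_x^2 u_{\eps'}\Vert_{H^{k+1}}$ you need $u_{\eps'}$ controlled in $H^{k+3}$ uniformly in $\eps'$. This is available because $g_0,g_1\in C_c^\infty$, $f\in C([0,T],C_c^\infty)$, and the estimates of Proposition~\ref{prop_est_Sob_k} hold at \emph{every} Sobolev level with $\eps'$-independent constants; so the two-derivative loss is harmless here, but it is the reason your Cauchy argument uses the smoothness of the data in an essential way.
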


\begin{proof}

\leavevmode
\begin{itemize}
 \item[(i)] {\bf Existence}. Assume that $f\in {C}([0,T], {C}_c^{\infty} (\R))$ and let 
 \[
 P(u)=\partial_t^2u-a(x)\partial^2_{x} u+ b_1(x)\partial_x u+ b_2(x)\partial_t u + b_3(x) u.
 \]
  The strictly hyperbolic Cauchy problem 
 \[
 \begin{split}
 P_\delta(u)&=\partial_t^2u-(a(x)+\delta)\partial^2_{x} u+ b_1(x)\partial_x u+ b_2(x)\partial_t u + b_3(x) u=f,\\
 u(0,x)&=g_0(x)\in{C}_c^{\infty}(\R),\\
\partial_t u(0,x)&=g_1(x)\in{C}_c^{\infty}(\R),
\end{split}
 \]
 has a unique solution $(u_\delta)_\delta$ defined via  $(U_\delta)_\delta$, the corresponding vector. Since, we can choose the constant 
 \[A=A(T,  M_2, \Vert a+\delta\Vert_{\infty}, \Vert a''\Vert_{\infty}, \Vert b_2\Vert_\infty, \Vert b_3\Vert_\infty, \Vert b'_1\Vert_\infty,  \Vert b'_2\Vert_\infty , \Vert b'_3\Vert_\infty)>0\]
 independent of $\delta\in(0,1)$, we have that when $g_0,g_1(x) \in{C}_c^{\infty} (\R)$, the net 
 \[
 U_\delta=(U^0_{\delta}, U_{1,\delta},U_{2,\delta}) 
 \]
 is bounded in $(L^2(\R))^{3}$. Therefore there exists a convergent subsequence in $(L^2(\R))^{3}$ with limit $U\in (L^2(\R))^{3}$ that solves the system 
\[
\begin{split}
\partial_t U=&A \partial_x U+F,\\
U(0,x)&=(g_0,g'_0,g_1)^{T},
\end{split}
\]
in the sense of distributions. The arguments that we apply are similar as in the homogeneous case, in the proof of Theorem 4.12  in \cite{G:21}.

 \item[(ii)] {\bf Uniqueness}. The uniqueness of the solution $u$ follows immediately from the estimate \eqref{est_u_H_k_1}. 
 \end{itemize}

\end{proof}
As a straightforward consequence we get the following result of $C^\infty$ well-posedness. It extends the well-posedness result obtained in \cite{ST} to any space dimension and is consistent with Oleinik's result \cite{O70}. As we have bounded coefficients, we obtain global well-posedness instead of local well-posedness.
\begin{corollary}
\label{cor_C_infty}
Let $a\ge 0$, $a, b_1, b_2, b_3\in B^{\infty}(\R)$ and let $f\in C([0,T], C^\infty_c(\R))$. Then the Cauchy problem \eqref{CP_1_ex} is $C^\infty(\R)$ well-posed, i.e., given $g_0,g_1\in{C}^\infty_c(\R)$ there exists a unique solution $C^2([0,1], {C}^\infty(\R))$ of 
\[
\begin{split}
\partial_t^2u-a(x)\partial^2_{x} u+ b_1(x)\partial_x u+ b_2(x)\partial_t u + b_3(x) u&=f(t,x),\quad t\in[0,T],\, x\in\R,\\
u(0,x)&=g_0,\\
\partial_tu(0,x)&=g_1.
\end{split}
\]
Moreover, the estimate \eqref{est_u_H_k_1} holds for every $k\in\N$.
\end{corollary}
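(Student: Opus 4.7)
The plan is to deduce Corollary \ref{cor_C_infty} directly from Theorem \ref{theo_main} by applying the Sobolev estimates at every order and then upgrading regularity in both the space and time variables.

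First I would invoke Theorem \ref{theo_main} for each $k\in\N$ with the given data $g_0,g_1\in C^\infty_c(\R)$ and $f\in C([0,T],C^\infty_c(\R))$. Since $g_0\in H^{k+2}$, $g_1\in H^{k+1}$ and $f(t)\in H^{k+1}$ for every $k$, the theorem produces a solution $u$ satisfying the estimate \eqref{est_u_H_k_1} for every order. Taking the intersection $\bigcap_{k\in\N}H^{k+1}(\R)$ and applying the Sobolev embedding $H^s(\R)\hookrightarrow C^m(\R)$ for $s>m+1/2$, we conclude that $u(t,\cdot)\in C^\infty(\R)$ for each $t\in[0,T]$.

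Next I would promote this pointwise-in-$t$ regularity to joint regularity in $C^2([0,T],C^\infty(\R))$. The existence argument of Theorem \ref{theo_main} actually produces $U=(u,\partial_x u,\partial_t u)\in C([0,T],(L^2(\R))^3)$ as a distributional limit of smooth approximants $U_\delta$; iterating the uniform-in-$\delta$ Sobolev bounds yields $U\in C([0,T],(H^k(\R))^3)$ for every $k$, so $u\in C([0,T],C^\infty(\R))$ and $\partial_t u\in C([0,T],C^\infty(\R))$, giving $u\in C^1([0,T],C^\infty(\R))$. For the second time derivative I would simply solve the PDE for $\partial_t^2 u$, namely
\[
\partial_t^2 u = a(x)\partial_x^2 u - b_1(x)\partial_x u - b_2(x)\partial_t u - b_3(x)u + f(t,x),
\]
and observe that the right-hand side belongs to $C([0,T],C^\infty(\R))$, since all coefficients are in $B^\infty(\R)$, every spatial derivative of $u$ is continuous in $t$, and $f\in C([0,T],C^\infty_c(\R))$. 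This yields $u\in C^2([0,T],C^\infty(\R))$ as claimed.

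Uniqueness is then immediate: if $u_1,u_2$ are two solutions with the same data, then $u=u_1-u_2$ solves \eqref{CP_1_ex} with vanishing data and vanishing right-hand side, and estimate \eqref{est_u_H_k_1} (with $k=0$, say) gives $\Vert u(t)\Vert_{H^1}^2\le 0$, so $u\equiv 0$. Finally, the validity of \eqref{est_u_H_k_1} for all $k\in\N$ is inherited from Theorem \ref{theo_main}. There is no real obstacle in this argument; the only point requiring a moment of care is the time regularity, which one must extract from the first-order system formulation and from bootstrapping through the equation itself, rather than from the spatial Sobolev estimates alone.
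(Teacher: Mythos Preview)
Your proposal is correct and matches the paper's intent: the paper presents Corollary \ref{cor_C_infty} as a ``straightforward consequence'' of Theorem \ref{theo_main} without giving any proof, and your argument supplies exactly the natural details---Sobolev estimates at every order, Sobolev embedding for spatial smoothness, and bootstrapping through the equation for the $C^2$ time regularity. There is nothing substantively different between your route and what the paper implicitly relies on.
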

Note that one can remove the assumption of compact support on $f$ and the initial data, by the finite speed of propagation. 

\section{The inhomogeneous wave equation with space-dependent singular coefficients}
When dealing with singular coefficients, one can encounter equations that may not have a meaningful classical distributional solution. This is due to the well-known problem that in general it is not possible to multiply two arbitrary distributions. To handle this, the notion of a very weak solution has been introduced in \cite{GR:14}. In \cite{GR:14}, the authors were looking for solutions modelled on Gevrey spaces. However, in this paper we will instead prove Sobolev well-posedness with loss of derivatives. This motivates the introduction of \emph{very weak solutions of Sobolev type}.
 
\subsection{Very weak solutions of Sobolev type}

In the sequel, let $\varphi$ be a mollifier ($\varphi\in C^\infty_c(\R^n)$, $\varphi\ge 0$ with $\int\varphi=1$) and let $\omega(\eps)$  a positive net converging to $0$ as $\eps\to 0$. Let $\varphi_{\omega(\eps)}(x)=\omega(\eps)^{-n}\varphi(x/\omega(\eps))$. $K\Subset\R^n$ stands for $K$ is a compact subset of $\R^n$. We are now ready to introduce the concepts of  $H^k$-moderate and $H^k$-negligible nets.

\begin{definition}
\label{def_mod_neg}
\leavevmode
\begin{itemize}
\item[(i)] A net $(v_\eps)_\eps\in H^k(\R^n)^{(0,1]}$ is $H^k$-moderate if there exist $N\in\N$ and $c>0$ such that
\beq
\label{mod_C_inf}
\| v_\eps(x)\|_{H^k(\R^n)} \le c\eps^{-N},
\eeq
uniformly in  $\eps\in(0,1]$.
\item[(ii)] A net $(v_\eps)_\eps\in H^k(\R^n)^{(0,1]}$  is $H^k$-negligible if for all $q\in\N$ there exists $c>0$ such that
\beq
\label{neg_C_inf}
\| v_\eps(x)\|_{H^k(\R^n)} \le c\eps^{q},
\eeq
uniformly in  $\eps\in(0,1]$.
\end{itemize}
\end{definition}

Since we will only be considering nets of $H^k$ functions, we can simply use the expressions \emph{moderate net} and \emph{negligible net} and drop the $H^k$-suffix. Note that Biagioni and Oberguggenberger introduced in \cite{BO:92} spaces  of generalised functions generated by nets in $H^\infty(\R^n)^{(0,1]}$. As a consequence, their notion of moderateness and negligibility involves derivatives of any order. 

Analogously, by considering $C^\infty([0,T]; H^k(\R^n))^{(0,1]}$ instead of  $H^k(\R^n)^{(0,1]}$, we formulate the following definition. 

\begin{definition}
\label{def_mod_neg_u}
\leavevmode
\begin{itemize}
\item[(i)] A net $(v_\eps)_\eps\in C^\infty([0,T]; H^k(\R^n))^{(0,1]}$ is moderate if for all $l\in\N$  there exist $N\in\N$ and $c>0$ such that
\beq
\label{mod_C_inf_u}
\|\partial_t^l v_\eps(t,\cdot)\|_{H^k(\R^n)} \le c\eps^{-N},
\eeq
uniformly in $t\in[0,T]$ and $\eps\in(0,1]$.
\item[(ii)] A net $(v_\eps)_\eps\in  C^\infty([0,T]; H^k(\R^n))^{(0,1]}$  is negligible if for all  $l\in\N$ and $q\in\N$ there exists $c>0$ such that
\beq
\label{neg_C_inf_u}
\|\partial_t^l v_\eps(t,\cdot)\|_{H^k(\R^n)} \le c\eps^{q},
\eeq
uniformly in $t\in[0,T]$ and $\eps\in(0,1]$.
\end{itemize}
\end{definition}


In order to introduce the notion of a very weak solution for the Cauchy problem \eqref{CP_1_intro},
\[
\begin{split}
\partial_t^2u-a(x)\partial^2_{x} u+ b_1(x)\partial_x u+ b_2(x)\partial_t u + b_3(x) u&=f(t,x),\quad t\in[0,T],\, x\in\R,\\
u(0,x)&=g_0,\\
\partial_tu(0,x)&=g_1,
\end{split}
\]
where $a\ge 0$ and in general all coefficients, initial data and right-hand side are compactly supported distributions, we need some preliminary work on how to regularise our equation. We will provide estimates in terms of $L^\infty$- as well as $L^2$-norm and we will focus on coefficients and initial data in $L^\infty(\R^n)$, $\E'(\R^n)$ and $B^{\infty}(\R^n)$. 
\begin{proposition}\label{lemma_mollification}
Let $\varphi$ be a mollifier ($\varphi\in C^\infty_c(\R^n)$, $\varphi\ge 0$ with $\int\varphi=1$) and $\omega(\eps)$  a positive function converging to $0$ as $\eps\to 0$. Let $\varphi_{\omega(\eps)}(x)=\omega(\eps)^{-n}\varphi(x/\omega(\eps))$. 
\begin{itemize}
\item[(i)] If $v \in L^\infty(\R^n)$, then  $\forall \eps \in (0,1)$, $v \ast \varphi_{\omega(\eps)}\in B^\infty(\R^n)$ and
\[
\forall \alpha \in \N^n, \,\Vert \partial^{\alpha}(v \ast \varphi_{\omega(\eps)}) \Vert_\infty \le \omega(\eps)^{-|\alpha|} \Vert v\Vert_\infty \Vert \partial^{\alpha}\varphi \Vert_{L^1}.
\]
\item[(ii)] If $v \in \E'(\R^n)$, then  $\forall \eps \in (0,1)$, $v \ast \varphi_{\omega(\eps)}\in B^\infty(\R^n)$ and
\[
\forall \alpha \in \N^n, \,\Vert \partial^{\alpha}(v \ast \varphi_{\omega(\eps)}) \Vert_\infty \le  \omega(\eps)^{-|\alpha|-m} \sum_{|\beta|\le m } \Vert  g_{\beta} \Vert_\infty \Vert \partial^{\alpha+ \beta}\varphi \Vert_{L^1},
\]
where $m \in \N$ and $g_\beta \in C_c(\R^n)$ come from the structure of $v$.
\item[(iii)] If $v \in B^{\infty}(\R^n)$, then  $\forall \eps \in (0,1)$, $v \ast \varphi_{\omega(\eps)}\in B^\infty(\R^n)$ and
\[
\forall \alpha \in \N^n, \,\Vert \partial^{\alpha}(v \ast \varphi_{\omega(\eps)}) \Vert_\infty \le  \Vert \partial^{\alpha}v \Vert_\infty .
\]
\end{itemize}
\end{proposition}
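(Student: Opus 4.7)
The plan is to prove each item by moving the derivative onto the mollifier (or onto $v$, in case (iii)), applying Young's inequality on $L^\infty \ast L^1$, and then computing the rescaled $L^1$-norm explicitly.

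For (i), since $v\in L^\infty(\R^n)$ and $\varphi_{\omega(\eps)}\in C^\infty_c(\R^n)$, differentiation under the integral sign yields $\partial^\alpha (v\ast \varphi_{\omega(\eps)})=v\ast \partial^\alpha\varphi_{\omega(\eps)}$, so $v\ast \varphi_{\omega(\eps)}\in C^\infty(\R^n)$ with $\|\partial^\alpha (v\ast \varphi_{\omega(\eps)})\|_\infty\le \|v\|_\infty\,\|\partial^\alpha\varphi_{\omega(\eps)}\|_{L^1}$. The scaling $\partial^\alpha\varphi_{\omega(\eps)}(x)=\omega(\eps)^{-n-|\alpha|}(\partial^\alpha\varphi)(x/\omega(\eps))$ together with the substitution $y=x/\omega(\eps)$ gives $\|\partial^\alpha\varphi_{\omega(\eps)}\|_{L^1}=\omega(\eps)^{-|\alpha|}\|\partial^\alpha\varphi\|_{L^1}$, which produces the claimed bound and, since the bound is uniform in $x$, shows membership in $B^\infty(\R^n)$.

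For (iii), the identity $\partial^\alpha (v\ast \varphi_{\omega(\eps)})=(\partial^\alpha v)\ast \varphi_{\omega(\eps)}$ holds because $\partial^\alpha v\in L^\infty(\R^n)$, and Young's inequality with $\|\varphi_{\omega(\eps)}\|_{L^1}=1$ gives the stated estimate immediately.

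For (ii), the only additional ingredient is the structure theorem for compactly supported distributions: there exist $m\in\N$ and continuous functions $g_\beta$, supported in a fixed compact neighbourhood of $\supp v$, such that $v=\sum_{|\beta|\le m}\partial^\beta g_\beta$. Then
\[
\partial^\alpha(v\ast\varphi_{\omega(\eps)})=\sum_{|\beta|\le m} g_\beta\ast \partial^{\alpha+\beta}\varphi_{\omega(\eps)},
\]
and applying the estimate from (i) to each term yields $\sum_{|\beta|\le m}\|g_\beta\|_\infty\,\omega(\eps)^{-|\alpha|-|\beta|}\|\partial^{\alpha+\beta}\varphi\|_{L^1}$. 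Since $|\beta|\le m$ and $\omega(\eps)\le 1$ for $\eps$ small, we bound $\omega(\eps)^{-|\alpha|-|\beta|}\le\omega(\eps)^{-|\alpha|-m}$, which delivers the stated inequality.

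The argument is essentially routine; the only mildly non-trivial step is invoking the structure theorem in (ii) to convert a compactly supported distribution into a finite sum of derivatives of continuous functions, after which everything reduces to (i). All three conclusions that $v\ast\varphi_{\omega(\eps)}\in B^\infty(\R^n)$ follow directly from the smoothness of $\varphi_{\omega(\eps)}$ combined with the uniform-in-$x$ bounds just derived.
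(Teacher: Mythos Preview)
Your proof is correct and follows essentially the same approach as the paper: move the derivative onto the mollifier (or onto $v$ in (iii)), use the $L^\infty\ast L^1\to L^\infty$ bound, compute the rescaled $L^1$-norm of $\partial^\alpha\varphi_{\omega(\eps)}$, and in (ii) invoke the structure theorem for $\E'(\R^n)$ to reduce to (i). The only cosmetic difference is that you package the convolution estimate as Young's inequality while the paper writes out the integral and change of variables explicitly.
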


\begin{proof}
\begin{itemize}
\item[(i)] By the properties of convolution, 
\begin{align*}
\Vert \partial^{\alpha}(v \ast \varphi_{\omega(\eps)}) \Vert_\infty &= \Vert v \ast (\partial^{\alpha}\varphi_{\omega(\eps)} )\Vert_\infty= \omega(\eps)^{-|\alpha|-n}\Vert v \ast (\partial^{\alpha}\varphi)\left(\frac{\cdot}{\omega(\eps)}\right) \Vert_\infty \\
&=  \omega(\eps)^{-|\alpha|-n}\Vert \int_{\R^n} v(\cdot-\xi)  (\partial^{\alpha}\varphi)\left(\frac{\xi}{\omega(\eps)}\right)d\xi \Vert_\infty \\
&=  \omega(\eps)^{-|\alpha|}\Vert \int_{\R^n} v(\cdot-\omega(\eps)z)  (\partial^{\alpha}\varphi)\left(z\right)dz \Vert_\infty \\
&\le \omega(\eps)^{-|\alpha|} \Vert v\Vert_\infty \Vert \partial^{\alpha}\varphi \Vert_{L^1}.
\end{align*}
\item[(ii)] By the structure theorem of compactly supported distributions, there exists $m \in \N$ and $g_\beta \in C_c(\R^n)$ such that
\begin{align*}
\Vert \partial^{\alpha}(v \ast \varphi_{\omega(\eps)}) \Vert_\infty &= \Vert ( \sum_{|\beta|\le m } \partial^{\beta} g_{\beta}  ) \ast (\partial^{\alpha}\varphi_{\omega(\eps)} )\Vert_\infty=  \Vert  \sum_{|\beta|\le m }( \partial^{\beta} g_{\beta}   \ast \partial^{\alpha}\varphi_{\omega(\eps)} ) \Vert_\infty\\
&= \Vert  \sum_{|\beta|\le m }(  g_{\beta}   \ast \partial^{\alpha+ \beta}\varphi_{\omega(\eps)} ) \Vert_\infty \le\sum_{|\beta|\le m } \Vert    g_{\beta}   \ast \partial^{\alpha+ \beta}\varphi_{\omega(\eps)} \Vert_\infty \\ &\le \omega(\eps)^{-|\alpha|-m-n}  \sum_{|\beta|\le m } \Vert  g_{\beta}   \ast (\partial^{\alpha+ \beta}\varphi)\left(\frac{\cdot}{\omega(\eps)}\right) \Vert_\infty   \\
&\le  \omega(\eps)^{-|\alpha|-m} \sum_{|\beta|\le m } \Vert  g_{\beta} \Vert_\infty \Vert \partial^{\alpha+ \beta}\varphi \Vert_{L^1}.
\end{align*} 
\item[(iii)] Putting the derivatives on $v$, we get
\begin{align*}
\Vert \partial^{\alpha}(v \ast \varphi_{\omega(\eps)}) \Vert_\infty &= \Vert (\partial^{\alpha}v) \ast \varphi_{\omega(\eps)} \Vert_\infty \le \Vert \partial^{\alpha}v \Vert_\infty \Vert\varphi_{\omega(\eps)} \Vert_{L^1}\\
&=\Vert \partial^{\alpha}v \Vert_\infty .
\end{align*}
\end{itemize}
\end{proof}
By employing Theorem 2.7 in \cite{BO:92} we get the following result of $L^2$-moderateness, where $\varphi$ and $\omega(\eps)$ are defined as above.
\begin{proposition}\label{prop_rhs_mod}
Let $w \in \mathcal{E}'(\R^n)$. Then, there exists $N \in \N$ and for  $\beta \in \N^n$  there exists $C>0$ such that,
\[
\Vert \partial^{\beta} (w \ast \varphi_{\omega(\eps)}(x) )\Vert_{L^2} \le C \omega(\eps)^{-N-|\beta|},
\]
where $N$ depends on the structure of $w$.
\end{proposition}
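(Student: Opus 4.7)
The plan is to reduce to the structure theorem for compactly supported distributions and then apply Young's convolution inequality in the form $L^{2}\ast L^{1}\hookrightarrow L^{2}$, exactly as was done in Proposition \ref{lemma_mollification}(ii) for the $L^\infty$-estimate, but swapping the placement of norms to suit the $L^2$ target.

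First I would invoke the structure theorem: since $w\in\E'(\R^n)$, there exist $m\in\N$ and continuous, compactly supported functions $g_\alpha\in C_c(\R^n)$ with $|\alpha|\le m$ such that
\[
w=\sum_{|\alpha|\le m}\partial^\alpha g_\alpha.
\]
Moving the derivatives onto the mollifier, as in the proof of Proposition \ref{lemma_mollification}(ii),
\[
\partial^\beta\bigl(w\ast\varphi_{\omega(\eps)}\bigr)=\sum_{|\alpha|\le m} g_\alpha\ast\partial^{\alpha+\beta}\varphi_{\omega(\eps)}.
\]

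Next I would estimate each summand using Young's inequality $\|f\ast g\|_{L^2}\le\|f\|_{L^2}\|g\|_{L^1}$, together with the standard scaling identity
\[
\|\partial^{\alpha+\beta}\varphi_{\omega(\eps)}\|_{L^1}=\omega(\eps)^{-|\alpha+\beta|}\,\|\partial^{\alpha+\beta}\varphi\|_{L^1}.
\]
Since each $g_\alpha$ is continuous with compact support, $\|g_\alpha\|_{L^2}$ is finite, giving
\[
\|g_\alpha\ast\partial^{\alpha+\beta}\varphi_{\omega(\eps)}\|_{L^2}\le \|g_\alpha\|_{L^2}\,\|\partial^{\alpha+\beta}\varphi\|_{L^1}\,\omega(\eps)^{-|\alpha|-|\beta|}.
\]
Summing over $|\alpha|\le m$ and using that $\omega(\eps)\to 0$ (so the worst power is attained at $|\alpha|=m$), we obtain the desired bound with $N=m$ and
\[
C= C(\varphi,w,\beta)=\sum_{|\alpha|\le m}\|g_\alpha\|_{L^2}\,\|\partial^{\alpha+\beta}\varphi\|_{L^1},
\]
which manifestly depends on the structural order $m$ of $w$.

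There is essentially no main obstacle: the only point that requires a moment of care is ensuring the $L^{2}$ norm lands on the continuous factor $g_\alpha$ (finite because of compact support) while the $L^{1}$ norm lands on the rescaled mollifier derivative (which absorbs the $\omega(\eps)^{-|\alpha|-|\beta|}$ blow-up). Alternatively, one may simply cite Theorem 2.7 of \cite{BO:92}, which packages exactly this computation; the argument above is offered as a self-contained derivation.
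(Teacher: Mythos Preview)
Your proof is correct and follows essentially the same approach as the paper: structure theorem for $\E'(\R^n)$, shift derivatives onto the mollifier, then Young's inequality $\|f\ast g\|_{L^2}\le\|f\|_{L^2}\|g\|_{L^1}$ together with the scaling of $\|\partial^{\alpha+\beta}\varphi_{\omega(\eps)}\|_{L^1}$. If anything, your write-up is slightly more explicit than the paper's (identifying $N=m$ and the constant $C$), and you avoid the minor typo in the paper's exponent.
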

\begin{proof}
By the structure theorem of compactly supported distributions, there exists $m \in \N$ and $w_\alpha \in C_c(\R^n)$ such that $ w=\sum_{|\alpha|\le m } \partial^{\alpha} w_{\alpha}$. For  $\beta \in \N^n$, by Young's inequality we get
\[
\Vert \partial^{\beta} (w \ast \varphi_{\omega(\eps)}(x) )\Vert_{L^2}\le \sum_{|\alpha|\le m } \Vert w_{\alpha} \Vert_{L^2}\Vert \partial^{\alpha+\beta} \varphi_{\omega(\eps)} \Vert_{L^1}=\sum_{|\alpha|\le m } \Vert w_{\alpha} \Vert_{L^2} \Vert \partial^{\alpha+\beta} \varphi\Vert_{L^1}\omega(\eps)^{-|\alpha-\beta|} .
\]
\end{proof}
Note that if $f\in C([0,T], \mathcal{E}'(\R^n)$ then the net
\[
(f(t,\cdot)\ast \varphi_{\omega(\eps)})(x)
\]
fulfils moderate estimates with respect to $x$ which are uniform with respect to $t\in[0,T]$. We now have all the needed background to go back to the Cauchy problem \eqref{CP_1_intro} and formulate the appropriate notion of very weak solution. We work under the assumptions that all the coefficients are distributions with compact support with $a$ positive distribution, $f\in C([0,T], \E'(\R))$ and $g_0,g_1\in \E'(\R)$.  As a first step, by convolution with a mollifier as in Proposition \ref{lemma_mollification}, we regularise the Cauchy problem \eqref{CP_1_intro} and get 
\beq
\label{CP_1_ex_reg}
\begin{split}
\partial_t^2u_{\eps}-a_{\eps}(x)\partial^2_{x} u_{\eps}+ b_{1,\eps}(x)\partial_x u_{\eps}+ b_{2,\eps}(x)\partial_t u_{\eps} + b_{3,\eps}(x) u_{\eps}&=f_{\eps}(t,x),\\
u_{\eps}(0,x)&=g_{0,\eps}(x),\\
\partial_tu_{\eps}(0,x)&=g_{1,\eps}(x),
\end{split}
\eeq
where $t\in[0,T]$, $x\in\R$, $a_{\eps}=a\ast \varphi_{\omega(\eps)}$, $b_{1,\eps}=b_{1}\ast \varphi_{\omega(\eps)}$, $b_{2,\eps}=b_{2}\ast \varphi_{\omega(\eps)}$, $b_{3,\eps}=b_{3}\ast \varphi_{\omega(\eps)}$, $ f_{\eps}(t,x)= f(t,\cdot)\ast \varphi_{\omega(\eps)}(x)$, $g_{0,\eps}=g_0\ast \varphi_{\omega(\eps)}$ and $g_{1,\eps}=g_1\ast \varphi_{\omega(\eps)}$.
We also assume that the regularised nets are real-valued (easily obtained with our choice of mollifiers). 

We also impose the Levi condition
\beq	 \label{Levi_mollified_Case1}
b_{1,\eps}(x)^2 \le M_{2,\eps} a_{\eps}(x).
\eeq
This will guarantee that the regularised Cauchy problem above has a net of smooth solutions $(u_\eps)_\eps$.

\begin{definition}
The net of solutions $u_\varepsilon(t,x)$ is a {\bf very weak solution of Sobolev order k} of the Cauchy problem \eqref{CP_1_intro}
if there exist $N \in \N$ and $c>0$  such that
\[
\| u_\varepsilon(t,\cdot)\|_{H^k(\R)}\le c  \varepsilon^{-N},
\]
for all $t\in[0,T]$ and $\varepsilon \in(0,1]$.
\end{definition}
In other words, the Cauchy problem \eqref{CP_1_intro} has a very weak solution of order $k$ if there exists moderate regularisations of coefficients, right-hand side and initial data such that the corresponding net of solutions $(u_\eps)_\eps$ is Sobolev moderate of order $k$. This definition is in line with the one introduced in \cite{GR:14} but moderateness is measured in terms of Sobolev norms rather than $C^\infty$-seminorms. 
\begin{remark}
Uniqueness of very weak solutions is formulated, as for algebras of generalised functions, in terms of negligible nets. Namely, we say that the Cauchy Problem \eqref{CP_1_intro} is \emph{very weakly well-posed} if a very weak solution exists and it is unique modulo negligible nets. We will discuss this more in details later in the paper.
\end{remark}

\subsection{Existence of a very weak solution}

In this subsection we want to understand which regularisations entail the existence of a very weak solution.

By using the transformation,
\[
U_{\eps}=(U_{\eps}^0,U_{1, \eps},U_{2,\eps})^T=(u_{\eps},\partial_{x}u_{\eps},\partial_tu_{\eps})^{T},
\]
we can rewrite the above Cauchy problem as
\[
\begin{split}
\partial_t U_{\eps}&=A_{\eps} \partial_x U_{\eps}+ B_{\eps} U_{\eps}+F_{\eps},\\
U_{\eps}(0,x)&=(g_{0,\eps},g_{0,\eps}',g_{1,\eps})^{T},
\end{split}
\]
where
\[
A=\left(
	\begin{array}{ccc}
	0& 0 & 0\\
	0& 0 & 1\\
    0& a_{\eps} & 0 
	\end{array}
	\right),
	\quad
B=\left(
	\begin{array}{ccc}
	0& 0 & 1\\
	0& 0 & 0\\
    -b_{3,\eps} &-b_{1,\eps} & -b_{2,\eps} 
	\end{array}
	\right) 
	\text{ and }	\quad	
F=\left(
	\begin{array}{c}
	0\\
	0 \\
	f_{\eps}
        \end{array}
	\right).
	\]

The symmetriser of the  matrix $A$ is
\[
Q_{\eps}=\left(
	\begin{array}{ccc}
	1&0&0 \\
	0&a_{\eps}& 0\\
    0&0 & 1 
	\end{array}
	\right),
\]
and the Energy is defined as follows:
\[
E_{\eps}(t)=(Q_{\eps}U_{\eps},U_{\eps})_{L^2}=\Vert U_{\eps}^0\Vert^2_{L^2}+(a_{\eps}U_{1,\eps},U_{1,\eps})_{L^2}+\Vert U_{2,\eps}\Vert^2_{L^2}.
\]
Note that since both $a$ and $\varphi_\eps$ are non-negative we have that $a_{\eps}\ge 0$ and therefore the bound from below
\[
\Vert U_{\eps}^0\Vert^2_{L^2}+\Vert U_{2,\eps}\Vert^2_{L^2}\le E_{\eps}(t)
\]
holds. By arguing as in Section \ref{sec_Oleinik} we arrive at
\beq
\label{est_en_eps}
\frac{dE_{\eps}(t)}{dt}=-((Q_{\eps}A_{\eps})'U_{\eps},U_{\eps})_{L^2}+((Q_{\eps}B_{\eps}+B_{\eps}^*Q_{\eps})U_{\eps}, U_{\eps})_{L^2}+2(Q_{\eps}U_{\eps},F_{\eps})_{L^2}.
\eeq
Estimating the right-hand side of \eqref{est_en_eps} requires Glaeser's inequality. However, since we are working with nets, we have that 
\[
|a'_\eps(x)|^2\le 2M_{1,\eps}a_\eps(x),
\]
where 
\[
M_{1,\eps}=\Vert a''_\eps\Vert_{L^\infty}.
\]
Hence, 
\begin{align}\label{estimate1_3x3_Case1}
((Q_{\eps}A_{\eps})'U_{\eps},U_{\eps})_{L^2}&=2(a_{\eps}'U_{1,\eps},U_{2,\eps})_{L^2}\le 2\Vert a_{\eps}'U_{1,\eps}\Vert_{L^2}\Vert U_{2,\eps}\Vert_{L^2}\\
&\le \Vert a_{\eps}'U_{1,\eps}\Vert_{L^2}^2+\Vert U_{2,\eps}\Vert^2_{L^2}\le 2M_{1,\eps}(a_{\eps}U_{1,\eps},U_{1,\eps})_{L^2}+\Vert U_{2,\eps}\Vert^2_{L^2}\nonumber \\
&\le \max(2M_{1,\eps},1)E_{\eps}(t). \nonumber
\end{align}
Making use of the Levi condition  \eqref{Levi_mollified_Case1}, 
\[
b_{1,\eps}(x)^2 \le M_{2,\eps} a_{\eps}(x),
\]
arguing as in Section \ref{sec_Oleinik} we obtain
\begin{align} \label{estimate2_3x3_Case1}
&((Q_{\eps}B_{\eps}+B_{\eps}^*Q_{\eps})U_{\eps}, U_{\eps})_{L^2}\\
& \le \max(M_{2,\eps},1+\Vert b_{3,\eps}\Vert_\infty, 1+2\Vert b_{2,\eps}\Vert_\infty,2+2\Vert b_{2,\eps}\Vert_\infty+\Vert b_{3,\eps}\Vert_\infty)E_{\eps}(t). \nonumber
\end{align}
Lastly,
\beq \label{estimate3_3x3_Case1}
2(Q_{\eps}U_{\eps},F_{\eps})_{L^2}=2(U_{2,\eps},f_{\eps})_{L^2}\le 2\Vert U_{2,\eps}\Vert_{L^2}\Vert f_{\eps}\Vert_{L^2} \le\Vert U_{2,\eps}\Vert_{L^2}^2+\Vert f_{\eps}\Vert_{L^2}^2 \le E_{\eps}(t)+\Vert f_{\eps}\Vert_{L^2}.
\eeq

Therefore from \eqref{estimate1_3x3_Case1}, \eqref{estimate2_3x3_Case1} and \eqref{estimate3_3x3_Case1}, we obtain
\[
\frac{dE_{\eps}}{dt}\le c_{\eps}E_{\eps}(t)+\Vert f_{\eps}\Vert_{L^2},
\]
where
\[
c_{\eps}=\max(2M_{1,\eps}+1+M_{2,\eps},  2M_{1,\eps}+3+2\Vert b_{2,\eps}\Vert_\infty+\Vert b_{3,\eps}\Vert_\infty,2+M_{2,\eps},4+2\Vert b_{2,\eps}\Vert_\infty+\Vert b_{3,\eps}\Vert_\infty).
\]

Applying Gr\"onwall's lemma and using the bound from below for the energy, we obtain the estimate following estimate
\beq \label{est_U_2_3x3_Case1}
\Vert u_{\eps}(t)\Vert_{L^2}^2\le C_{\eps}\biggl(\Vert g_{0,\eps}\Vert_{H^1}^2+\Vert g_{1,\eps}\Vert_{L^2}^2+\int_{0}^t\Vert f_{\eps}(s)\Vert_{L^2}^2\, ds\biggr),
\eeq
 where the constant $C_{\eps}$ depends linearly on $\Vert a_{\eps}\Vert_\infty$ and exponentially on $T$, $M_{1,\eps}$, $M_{2,\eps}$,  $\Vert b_{2,\eps}\Vert_\infty$ and $\Vert b_{3,\eps}\Vert_\infty$. In particular,
 \beq
\label{C_2_3x3_Case 1}
\begin{split}
C_{\eps}&= {\rm e}^{c_{\eps}T}\max(\Vert a_{\eps}\Vert_{\infty},1)\\
= &{\rm e}^{\max(2M_{1,\eps}+1+M_{2,\eps},  2M_{1,\eps}+3+2\Vert b_{2,\eps}\Vert_\infty+\Vert b_{3,\eps}\Vert_\infty,2+M_{2,\eps},4+2\Vert b_{2,\eps}\Vert_\infty+\Vert b_{3,\eps}\Vert_\infty) T}\max(\Vert a_\eps\Vert_{\infty},1).
\end{split}
\eeq

By looking at the estimate above it is natural to formulate three different cases for the coefficients.
\begin{itemize}
\item {\bf Case 1:} the coefficients $a, b_1, b_2, b_3\in L^{\infty}(\R)$ and $a(x)\ge 0$ for all $x\in\R$.
\item {\bf Case 2:} the coefficients $a, b_1, b_2, b_3\in \E'(\R)$ and $a\ge 0$.
\item {\bf Case 3:} the coefficients $a, b_1, b_2, b_3\in B^{\infty}(\R)$ and $a(x)\ge 0$ for all $x\in\R$.
\end{itemize}

The analysis of these cases will require the following parametrised version of Glaeser's inequality, that for the sake of completeness, we formulate in $\R^n$. We recall that in $\R^n$, Glaeser's inequality is formulated as follows: 
If $a\in C^2(\R^n)$, $a(x)\ge 0$ for all $x\in\R^n$ and 
\[
\sum_{i=1}^n\Vert \partial^2_{x_i}a\Vert_{L^\infty}\le M,
\]
for some constant $M_1>0$. Then,  
\[
|\partial_{x_i}a(x)|^2\le 2M_1 a(x),
\]
for all $i=1,\dots,n$ and $x\in\R^n$. 

Hence, we immediately obtained its parametrised version for $a_\eps=a\ast\varphi_{\omega(\eps)}$, where $\varphi$ be a mollifier ($\varphi\in C^\infty_c(\R^n)$, $\varphi\ge 0$ with $\int\varphi=1$) and $\omega(\eps)$  a positive net converging to $0$ as $\eps\to 0$.
 
\begin{proposition} 
\label{prop_Glaeser_mollified}
\leavevmode
\begin{itemize}
\item[(i)] If $a \in L^\infty(\R^n)$ and $a\ge 0$ then   
\[
|\partial_{x_i}a_{\eps}(x)|^2\le 2M_{1,\eps} a_{\eps}(x),
\]
for all $i=1,\dots,n$, $x\in\R^n$ and $\forall \eps \in (0,1)$, where 
$$M_{1,\eps}=\omega(\eps)^{-2} \Vert a\Vert_\infty \sum_{i=1}^n \Vert \partial^2_{x_i}\varphi \Vert_{L^1}.$$
\item[(ii)]
If $a \in \E'(\R^n)$ and $a\ge 0$ then
\[
|\partial_{x_i}a_{\eps}(x)|^2\le 2M_{1,\eps} a_{\eps}(x),
\]
for all $i=1,\dots,n$, $x\in\R^n$ and $\forall \eps \in (0,1)$, where 
$$M_{1,\eps}=\omega(\eps)^{-2-m} \sum_{|\beta|\le m } \Vert  g_{\beta} \Vert_\infty \sum_{|\alpha| = 2} \Vert \partial^{\alpha+ \beta}\varphi \Vert_{L^1}$$
and $m \in \N$ and $g_\beta \in C_c(\R^n)$ come from the structure of $a$. 
\item[(iii)]
If $a \in  B^{\infty}(\R^n)$ and $a\ge 0$ then  
\[
|\partial_{x_i}a_{\eps}(x)|^2\le 2M_1 a_{\eps}(x),
\]
for all $i=1,\dots,n$, $x\in\R^n$ and $\forall \eps \in (0,1)$, where $M_1= \sum_{i=1}^n \Vert \partial^2_{x_i} a\Vert_\infty$.
\end{itemize}
\end{proposition}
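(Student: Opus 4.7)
The strategy is to reduce all three cases to the classical Glaeser inequality applied to the smooth function $a_\eps=a\ast\varphi_{\omega(\eps)}$. The plan has three ingredients: (a) verify that $a_\eps$ is smooth and non-negative so that the hypotheses of Glaeser's inequality hold; (b) apply the classical Glaeser inequality to $a_\eps$ componentwise; (c) invoke Proposition \ref{lemma_mollification} to make the constant $M_{1,\eps}$ explicit in each of the three regularity settings.

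First I would observe that in each case $a_\eps\in C^\infty(\R^n)$ and $a_\eps(x)\ge 0$ for all $x\in\R^n$. Smoothness is immediate from the properties of convolution with a test function. Non-negativity is immediate in cases (i) and (iii) where $a$ is a non-negative measurable function: $a_\eps(x)=\int a(x-y)\varphi_{\omega(\eps)}(y)\,dy\ge 0$ since both factors are non-negative. In case (ii), where $a\in\E'(\R^n)$ is a non-negative distribution, I would invoke the classical Schwartz structure result that every non-negative distribution is a (locally finite) Radon measure; hence $a_\eps(x)=\int\varphi_{\omega(\eps)}(x-y)\,da(y)\ge 0$ pointwise.

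Next, for fixed $\eps\in(0,1]$ the function $a_\eps$ satisfies the hypotheses of the classical Glaeser inequality in $\R^n$ recalled just above the proposition, with constant
\[
M_{1,\eps}=\sum_{i=1}^n \Vert\partial^2_{x_i}a_\eps\Vert_{L^\infty}.
\]
Applying it yields
\[
|\partial_{x_i}a_\eps(x)|^2\le 2M_{1,\eps}\,a_\eps(x),
\]
for every $i=1,\dots,n$ and every $x\in\R^n$, uniformly in $\eps$ in the sense that only the constant $M_{1,\eps}$ depends on $\eps$. It remains to make this constant explicit in each of the three cases, and this is where Proposition \ref{lemma_mollification} enters: in (i) we apply part (i) with $|\alpha|=2$ to obtain $\Vert \partial^2_{x_i}a_\eps\Vert_\infty\le \omega(\eps)^{-2}\Vert a\Vert_\infty\Vert\partial^2_{x_i}\varphi\Vert_{L^1}$; in (ii) we apply part (ii) to obtain $\Vert\partial^2_{x_i}a_\eps\Vert_\infty\le \omega(\eps)^{-2-m}\sum_{|\beta|\le m}\Vert g_\beta\Vert_\infty\Vert\partial^{2e_i+\beta}\varphi\Vert_{L^1}$; in (iii) we apply part (iii) to obtain the $\eps$-independent bound $\Vert\partial^2_{x_i}a_\eps\Vert_\infty\le \Vert\partial^2_{x_i}a\Vert_\infty$. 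Summing over $i$ in each case gives the announced expressions for $M_{1,\eps}$.

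The only genuinely non-trivial step is the verification of non-negativity of $a_\eps$ in case (ii); once that is secured, the rest of the argument is a bookkeeping exercise combining the classical Glaeser inequality with the mollification estimates of Proposition \ref{lemma_mollification}. Note that the $\omega(\eps)^{-2}$ blow-up in (i) and the $\omega(\eps)^{-2-m}$ blow-up in (ii) are what will ultimately feed into the moderateness estimates for the very weak solution in the forthcoming sections, while case (iii) yields an $\eps$-independent constant as in the classical regular setting.
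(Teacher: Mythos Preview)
Your proposal is correct and follows essentially the same approach as the paper, which simply states that the parametrised version is ``immediately obtained'' by applying the classical Glaeser inequality to $a_\eps$ and then reading off the constants from Proposition~\ref{lemma_mollification}. You have in fact supplied more detail than the paper does, in particular the justification of $a_\eps\ge 0$ in case~(ii) via the identification of non-negative distributions with Radon measures.
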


In the rest of this section we will analyse the three different cases above and prove the existence of a very weak solution. In the sequel we say that $\omega(\eps)^{-1}$ is a scale of logarithmic type if $\omega(\eps)^{-1}\prec\ln(\eps^{-1})$.

\subsection{Case 1}
We assume that our coefficients $a, b_1, b_2, b_3\in L^{\infty}(\R)$ and $a(x)\ge 0$ for all $x\in\R$. From Proposition \ref{prop_Glaeser_mollified} (i), $M_{1,\eps}=\omega(\eps)^{-2} \Vert a\Vert_\infty \Vert \partial^2\varphi \Vert_{L^1}$ and $\Vert a_{\eps}\Vert_\infty\le  \Vert a\Vert_\infty$, $\Vert b_{2,\eps}\Vert_\infty\le  \Vert b_{2}\Vert_\infty$, $\Vert b_{3,\eps}\Vert_\infty\le  \Vert b_{3}\Vert_\infty$ from Proposition \ref{lemma_mollification} (i). The above calculations in combination with \eqref{C_2_3x3_Case 1}, \eqref{est_U_2_3x3_Case1} and Proposition \ref{prop_rhs_mod} show that we have proven the following theorem.
\begin{theorem}
\label{theo_main_case_1}
Let us consider the Cauchy problem
\beq
\label{CP_1_ex_case1}
\begin{split}
\partial_t^2u-a(x)\partial^2_{x} u+ b_1(x)\partial_x u+ b_2(x)\partial_t u + b_3(x) u&=f(t,x),\\
u(0,x)&=g_0,\\
\partial_tu(0,x)&=g_1,
\end{split}
\eeq
where $t\in[0,T]$, $x\in\R$.
Assume the following set of hypotheses (Case 1):
\begin{itemize}
\item[(i)] the coefficients $a, b_1, b_2, b_3\in L^{\infty}(\R)$ and $a(x)\ge 0$ for all $x\in\R$,
\item[(ii)] $f\in C([0,T],\mathcal{E}'(\R))$,
\item[(iii)] $g_0,g_1\in  \mathcal{E}'(\R)$.
\end{itemize}
If we regularise 
\[
\begin{split}
&a_{\eps}= a\ast \varphi_{\omega(\eps)},\,
b_{1,\eps}= b_1\ast \varphi_{\eps},\,
b_{2,\eps}= b_2\ast \varphi_{\eps},\,
b_{3,\eps}= b_3\ast \varphi_{\eps},\,\\
&f_\eps = f(t,\cdot)\ast\varphi_\eps,\,
g_{0,\eps}=g_0\ast\varphi_\eps,\,
g_{1,\eps}=g_1\ast\varphi_\eps,
\end{split}
\] 
where $\omega(\eps)^{-1}$ is a scale of logarithmic type and the following Levi condition
\begin{align*}  
b_{1,\eps}(x)^2 \le \ln(\eps^{-1}) a_{\eps}(x),
\end{align*}
is fulfilled for $\eps\in(0,1]$, then the Cauchy problem has a very weak solution of Sobolev order $0$.
\end{theorem}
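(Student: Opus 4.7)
The plan is to lift the smooth-coefficient theory of Section \ref{sec_Oleinik} uniformly across the regularising parameter $\eps$, and then match scales so that every $\eps$-dependent constant in the resulting estimate is polynomially bounded in $\eps^{-1}$. First, for fixed $\eps \in (0,1]$, Proposition \ref{lemma_mollification}(i) places all regularised coefficients $a_\eps, b_{j,\eps}$ in $B^\infty(\R)$, with $a_\eps \ge 0$ since both $a$ and $\varphi_{\omega(\eps)}$ are non-negative. The regularised data $g_{0,\eps}, g_{1,\eps}$ are smooth and $f_\eps \in C([0,T], C^\infty(\R))$, and the hypothesised Levi condition \eqref{Levi_mollified_Case1} plays the role of the classical $b_1^2 \le M_2 a$. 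Hence Corollary \ref{cor_C_infty}, applied to the regularised equation \eqref{CP_1_ex_reg}, produces a unique smooth solution $u_\eps$ for every $\eps$, together with the $L^2$-estimate \eqref{est_U_2_3x3_Case1} already derived in this subsection.

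The bulk of the proof is then to verify that the right-hand side of \eqref{est_U_2_3x3_Case1} is moderate in $\eps$, uniformly in $t \in [0,T]$. The data norms $\Vert g_{0,\eps}\Vert_{H^1}$, $\Vert g_{1,\eps}\Vert_{L^2}$ and $\Vert f_\eps(s)\Vert_{L^2}$ are handled by Proposition \ref{prop_rhs_mod}, which gives polynomial growth in $\omega(\eps)^{-1}$ and, a fortiori, in $\eps^{-1}$. For the prefactor $C_\eps$ displayed in \eqref{C_2_3x3_Case 1} I combine three inputs: Proposition \ref{lemma_mollification}(i) gives the uniform bounds $\Vert a_\eps\Vert_\infty \le \Vert a\Vert_\infty$ and $\Vert b_{j,\eps}\Vert_\infty \le \Vert b_j\Vert_\infty$; Proposition \ref{prop_Glaeser_mollified}(i) supplies $M_{1,\eps} = C\,\omega(\eps)^{-2}$; and the Levi hypothesis sets $M_{2,\eps} = \ln(\eps^{-1})$. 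Inspecting the explicit formula for $c_\eps$ then yields a bound of the form $c_\eps T \le c_0\bigl(\omega(\eps)^{-2} + \ln(\eps^{-1}) + 1\bigr)$, with $c_0$ depending only on $T, \Vert a\Vert_\infty, \Vert b_2\Vert_\infty, \Vert b_3\Vert_\infty$ and the mollifier $\varphi$.

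The main obstacle, and the technical heart of the argument, is the resulting scale matching. The Levi condition is what tames the otherwise uncontrolled growth of $M_{2,\eps}$ coming from $b_{1,\eps}$, reducing it to logarithmic order in $\eps^{-1}$; Proposition \ref{prop_Glaeser_mollified}(i) contributes the unavoidable Glaeser cost $\omega(\eps)^{-2}$. The assumption that $\omega(\eps)^{-1}$ is of logarithmic type is precisely what keeps $\omega(\eps)^{-2}$ at a scale compatible with $\ln(\eps^{-1})$, so that $c_\eps T = O(\ln(\eps^{-1}))$ and therefore $C_\eps = e^{c_\eps T}\max(\Vert a_\eps\Vert_\infty,1) = O(\eps^{-N})$ for some $N \in \N$. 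Multiplying this polynomial prefactor by the moderate data norms yields $\Vert u_\eps(t)\Vert_{L^2}^2 \le c\,\eps^{-2N'}$ uniformly in $t \in [0,T]$, which is exactly the definition of a very weak solution of Sobolev order $0$. Had the Levi condition failed, or had $\omega(\eps)^{-1}$ been chosen on a larger scale, the exponential factor $e^{c_\eps T}$ would leave the polynomial regime and the argument would collapse.
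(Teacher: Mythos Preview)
Your approach is exactly the paper's: you invoke the same ingredients (Proposition~\ref{lemma_mollification}(i) for the $L^\infty$ bounds, Proposition~\ref{prop_Glaeser_mollified}(i) for the Glaeser constant $M_{1,\eps}$, Proposition~\ref{prop_rhs_mod} for moderateness of the data, and the energy estimate \eqref{est_U_2_3x3_Case1} with the constant \eqref{C_2_3x3_Case 1}) and assemble them in the same order. The paper's own argument is in fact terser than yours and does not spell out the scale-matching step at all.

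That said, the one place where you go beyond the paper---the explicit scale-matching---contains a gap. You write that the logarithmic-type assumption ``keeps $\omega(\eps)^{-2}$ at a scale compatible with $\ln(\eps^{-1})$, so that $c_\eps T = O(\ln(\eps^{-1}))$.'' But the paper defines logarithmic type by $\omega(\eps)^{-1}\prec\ln(\eps^{-1})$, which only gives $\omega(\eps)^{-2}\prec(\ln(\eps^{-1}))^2$. Since $M_{1,\eps}=O(\omega(\eps)^{-2})$ enters $c_\eps$ linearly, you obtain at best $c_\eps T = O\bigl((\ln\eps^{-1})^2\bigr)$, and then $C_\eps=e^{c_\eps T}=\eps^{-O(\ln\eps^{-1})}$, which is \emph{not} polynomially bounded in $\eps^{-1}$. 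To make the argument go through one needs the stronger condition $\omega(\eps)^{-2}\prec\ln(\eps^{-1})$, i.e.\ $\omega(\eps)^{-1}\prec(\ln(\eps^{-1}))^{1/2}$. The paper's proof, being less explicit, passes over this same point in silence; as literally stated, the definition of ``logarithmic type'' appears too weak for the claimed moderateness, so the issue is shared rather than unique to your write-up.
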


\subsection{Case 2}
We assume that our coefficients $a, b_1, b_2, b_3\in \E'(\R)$ and $a\ge 0$. From Proposition \ref{prop_Glaeser_mollified} (ii), $M_{1,\eps}=\omega(\eps)^{-2-m_{a}} \sum_{\beta\le m_{a} } \Vert  g_{a,\beta} \Vert_\infty  \Vert \partial^{\beta+2}\varphi \Vert_{L^1}$, where $m_{a} \in \N$ and $g_{a,\beta} \in C_c(\R)$ come from the structure of $a$. Analogously, from Proposition \ref{lemma_mollification} (ii),  $\Vert a_{\eps}\Vert_\infty\le  \omega(\eps)^{-m_{a}} \sum_{\beta\le m_{a} } \Vert  g_{a,\beta} \Vert_\infty \Vert \partial^{ \beta}\varphi \Vert_{L^1}$, $\Vert b_{2,\eps}\Vert_\infty\le  \omega(\eps)^{-m_{b_2}} \sum_{\beta\le m_{b_2} } \Vert  g_{b_2,\beta} \Vert_\infty \Vert \partial^{ \beta}\varphi \Vert_{L^1}$, $\Vert b_{3,\eps}\Vert_\infty\le  \omega(\eps)^{-m_{b_3}} \sum_{\beta\le m_{b_3} } \Vert  g_{b_3,\beta} \Vert_\infty \Vert \partial^{ \beta}\varphi \Vert_{L^1}$. The above calculations in combination with \eqref{C_2_3x3_Case 1}, \eqref{est_U_2_3x3_Case1} and Proposition \ref{prop_rhs_mod} show that we have proven the following theorem.

%
%

%

\begin{theorem}
\label{theo_main_case_2}
Let us consider the Cauchy problem
\beq
\label{CP_1_ex_case2}
\begin{split}
\partial_t^2u-a(x)\partial^2_{x} u+ b_1(x)\partial_x u+ b_2(x)\partial_t u + b_3(x) u&=f(t,x),\\
u(0,x)&=g_0,\\
\partial_tu(0,x)&=g_1,
\end{split}
\eeq
where $t\in[0,T]$, $x\in\R$.
Assume the following set of hypotheses (Case 2):
\begin{itemize}
\item[(i)] the coefficients $a, b_1, b_2, b_3\in \E'(\R)$ and $a\ge 0$,
\item[(ii)] $f\in C([0,T],\mathcal{E}'(\R))$,
\item[(iii)] $g_0,g_1\in  \mathcal{E}'(\R)$.
\end{itemize}
If we regularise 
\[
\begin{split}
&a_{\eps}= a\ast \varphi_{\omega(\eps)},\,
b_{1,\eps}= b_1\ast \varphi_{\omega(\eps)},\,
b_{2,\eps}= b_2\ast \varphi_{\omega(\eps)},\,
b_{3,\eps}= b_3\ast \varphi_{\omega(\eps)},\,\\
&f_\eps = f(t,\cdot)\ast\varphi_\eps,\,
g_{0,\eps}=g_0\ast\varphi_\eps,\,
g_{1,\eps}=g_1\ast\varphi_\eps,
\end{split}
\] 
where $\omega(\eps)^{-1}$ is a scale of logarithmic type and the following Levi condition
\begin{align*}  
b_{1,\eps}(x)^2 \le \ln(\eps^{-1}) a_{\eps}(x),
\end{align*}
is fulfilled for $\eps\in(0,1]$, then the Cauchy problem has a very weak solution of Sobolev order $0$.
\end{theorem}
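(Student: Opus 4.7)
The plan is to mimic the Case 1 argument while replacing the $L^\infty$ bounds for the coefficients with the moderate bounds available for compactly supported distributions. After regularising as in \eqref{CP_1_ex_reg}, the Levi condition \eqref{Levi_mollified_Case1} $b_{1,\varepsilon}^2\le M_{2,\varepsilon} a_\varepsilon$ together with $a_\varepsilon\ge 0$ and smoothness of all data yields, for each fixed $\varepsilon$, a unique smooth solution $u_\varepsilon$ by Corollary \ref{cor_C_infty} applied to the regularised system. The energy estimate \eqref{est_U_2_3x3_Case1}, with constant $C_\varepsilon$ given explicitly in \eqref{C_2_3x3_Case 1}, is then inherited uniformly in $\varepsilon$:
\[
\Vert u_\varepsilon(t)\Vert_{L^2}^2\le C_\varepsilon\Big(\Vert g_{0,\varepsilon}\Vert_{H^1}^2+\Vert g_{1,\varepsilon}\Vert_{L^2}^2+\int_0^t\Vert f_\varepsilon(s)\Vert_{L^2}^2\,ds\Big).
\]

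Next, I would bound each of the quantities entering $C_\varepsilon$ and the right-hand side in terms of negative powers of $\omega(\varepsilon)$. Proposition \ref{lemma_mollification}(ii) gives, for each coefficient $c\in\{a,b_2,b_3\}$, an estimate $\Vert c_\varepsilon\Vert_\infty\le C\,\omega(\varepsilon)^{-m_c}$, where $m_c$ is the order of the compactly supported distribution $c$. Proposition \ref{prop_Glaeser_mollified}(ii) supplies $M_{1,\varepsilon}\le C\,\omega(\varepsilon)^{-2-m_a}$. Proposition \ref{prop_rhs_mod} applied to $f(t,\cdot)$, $g_0$, $g_1$ gives bounds of the form $C\,\omega(\varepsilon)^{-N}$ on $\Vert f_\varepsilon(t,\cdot)\Vert_{L^2}$ and on the Sobolev norms of $g_{0,\varepsilon}$, $g_{1,\varepsilon}$, uniformly in $t\in[0,T]$. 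Thus the whole parenthesis on the right-hand side is a power of $\omega(\varepsilon)^{-1}$.

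The decisive step is to show that the prefactor $C_\varepsilon$, which is exponential in $M_{1,\varepsilon}$, $M_{2,\varepsilon}$, $\Vert b_{2,\varepsilon}\Vert_\infty$ and $\Vert b_{3,\varepsilon}\Vert_\infty$, is still bounded by a negative power of $\varepsilon$. This is where the two hypotheses of the theorem cooperate: the Levi condition forces the exponent of $\varepsilon^{-1}$ coming from $M_{2,\varepsilon}\le\ln(\varepsilon^{-1})$ to be at most logarithmic, while the assumption that $\omega(\varepsilon)^{-1}$ is of logarithmic type, $\omega(\varepsilon)^{-1}\prec\ln(\varepsilon^{-1})$, controls the other exponents $\omega(\varepsilon)^{-k}$ appearing in $M_{1,\varepsilon}$ and in $\Vert b_{i,\varepsilon}\Vert_\infty$. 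Exponentiating a logarithmic quantity produces the polynomial-in-$\varepsilon^{-1}$ bound required for moderateness, so $\Vert u_\varepsilon(t)\Vert_{L^2}\le C\,\varepsilon^{-N}$ uniformly in $t\in[0,T]$, and $(u_\varepsilon)_\varepsilon$ is a very weak solution of Sobolev order $0$.

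The main obstacle I expect is bookkeeping: the orders $m_a, m_{b_1}, m_{b_2}, m_{b_3}$ of the distributional coefficients, together with the power $2+m_a$ arising in Glaeser's inequality, determine the worst exponent of $\omega(\varepsilon)^{-1}$ appearing inside $C_\varepsilon$, and the logarithmic scale $\omega$ must be calibrated against this exponent so that $e^{c_\varepsilon T}\lesssim \varepsilon^{-N}$ for some fixed $N$. Once this compatibility is recorded, the argument is a one-to-one transcription of the Case 1 proof, with Proposition \ref{lemma_mollification}(ii) replacing Proposition \ref{lemma_mollification}(i) and Proposition \ref{prop_Glaeser_mollified}(ii) replacing Proposition \ref{prop_Glaeser_mollified}(i).
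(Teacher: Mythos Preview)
Your proposal is correct and follows essentially the same route as the paper: the paper's argument for Case~2 consists precisely of invoking Proposition~\ref{lemma_mollification}(ii) for the $L^\infty$-bounds on $a_\eps,b_{2,\eps},b_{3,\eps}$, Proposition~\ref{prop_Glaeser_mollified}(ii) for $M_{1,\eps}$, and Proposition~\ref{prop_rhs_mod} for the data, then feeding these into \eqref{est_U_2_3x3_Case1}--\eqref{C_2_3x3_Case 1}. One small slip: $f,g_0,g_1$ are convolved with $\varphi_\eps$, not $\varphi_{\omega(\eps)}$, so Proposition~\ref{prop_rhs_mod} yields bounds $C\eps^{-N}$ rather than $C\omega(\eps)^{-N}$; this is harmless for moderateness. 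Your remark that the scale $\omega$ must be calibrated so that the \emph{powers} $\omega(\eps)^{-(2+m_a)}$, $\omega(\eps)^{-m_{b_i}}$ in the exponent stay $\prec\ln(\eps^{-1})$ is in fact a point the paper leaves implicit.
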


\subsection{Case 3}
We assume that our coefficients $a, b_1, b_2, b_3\in B^{\infty}(\R)$ and $a(x)\ge 0$ for all $x\in\R$. This corresponds to the classical case in Section \ref{section_classical}. In the next proposition we prove that the classical Levi condition on $b_1$ can be transferred on the regularised coefficient $b_{1,\eps}$.  
\begin{proposition}\label{prop_consistency}
Let\[
\begin{split}
\partial_t^2u-a(x)\partial^2_{x} u+ b_1(x)\partial_x u+ b_2(x)\partial_t u + b_3(x) u&=f(t,x),\\
u(0,x)&=g_0,\\
\partial_tu(0,x)&=g_1,
\end{split}
\]
where $a, b_1, b_2, b_3\in B^{\infty}(\R)$ and $a \geq 0$. Suppose the Levi condition
\[	 
b_1(x)^2 \le M_2 a(x), 
\]
holds for some $M_2>0$. Then the Levi condition also holds for $b_{1,\eps}$, i.e.,  
\[	 
b_{1,\eps}(x)^2 \le \tilde{M_2} a_{\eps}(x),
\]
for some constant $\tilde{M_2}$, independent of $\eps$.
\end{proposition}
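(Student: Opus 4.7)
The plan is to show the inequality via Cauchy--Schwarz applied to the convolution integral, using crucially that $\varphi_{\omega(\eps)}\ge 0$ and $\int\varphi_{\omega(\eps)}=1$. The constant will in fact be $\tilde M_2=M_2$, independent of $\eps$.

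First I would write out
\[
b_{1,\eps}(x)=\int_{\R}b_1(x-y)\,\varphi_{\omega(\eps)}(y)\,dy=\int_{\R}b_1(x-y)\sqrt{\varphi_{\omega(\eps)}(y)}\cdot\sqrt{\varphi_{\omega(\eps)}(y)}\,dy,
\]
which makes sense because $\varphi_{\omega(\eps)}\ge 0$. Applying Cauchy--Schwarz to this splitting and using $\int\varphi_{\omega(\eps)}=1$ gives
\[
b_{1,\eps}(x)^{2}\le\left(\int_{\R}b_1(x-y)^{2}\varphi_{\omega(\eps)}(y)\,dy\right)\left(\int_{\R}\varphi_{\omega(\eps)}(y)\,dy\right)=\bigl(b_1^{2}\ast\varphi_{\omega(\eps)}\bigr)(x).
\]

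Next I would invoke the hypothesis $b_1(x)^{2}\le M_2\,a(x)$ pointwise in $\R$. Since $\varphi_{\omega(\eps)}\ge 0$, convolution with $\varphi_{\omega(\eps)}$ preserves pointwise inequalities, hence
\[
\bigl(b_1^{2}\ast\varphi_{\omega(\eps)}\bigr)(x)\le M_2\,\bigl(a\ast\varphi_{\omega(\eps)}\bigr)(x)=M_2\,a_{\eps}(x).
\]
Combining the two displays yields $b_{1,\eps}(x)^{2}\le M_2\,a_{\eps}(x)$ for all $x\in\R$ and all $\eps\in(0,1]$, so the claim holds with $\tilde M_2=M_2$.

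There is essentially no obstacle here: the only two ingredients are the Cauchy--Schwarz inequality (for which one needs $\varphi_{\omega(\eps)}\ge 0$ to make the square-root splitting legitimate) and the fact that convolution against a nonnegative, unit-mass kernel is a positivity-preserving contraction on pointwise inequalities. Both are automatic from the standing assumptions on the mollifier $\varphi$. Note that the argument does not use the $B^\infty$-regularity of $a$ or $b_1$; it only uses $a\ge 0$, the pointwise Levi condition, and the properties of $\varphi_{\omega(\eps)}$, so in fact the same proof transfers the Levi condition in the more singular settings of Cases 1 and 2 whenever a pointwise Levi bound is available on the regularised coefficients.
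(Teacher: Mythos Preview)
Your proof is correct and in fact cleaner than the paper's. Both arguments hinge on Cauchy--Schwarz applied to the convolution integral, but the splittings differ. The paper writes
\[
\biggl(\int M_2^{1/2}a^{1/2}(x-\xi)\,\varphi_{\omega(\eps)}(\xi)\,d\xi\biggr)^2
\le \mu\bigl(\supp(\varphi_{\omega(\eps)})\bigr)\int M_2\,a(x-\xi)\,\varphi_{\omega(\eps)}(\xi)^2\,d\xi,
\]
pairing the full integrand against the indicator of the support; this introduces the factor $\mu(\supp(\varphi_{\omega(\eps)}))\sim C\omega(\eps)$ and a square $\varphi_{\omega(\eps)}^2$, and one then has to observe that $\omega(\eps)\cdot\varphi_{\omega(\eps)}(\xi)=\varphi(\xi/\omega(\eps))\le\Vert\varphi\Vert_\infty$ to recover an $\eps$-independent constant $\tilde M_2=C\,\Vert\varphi\Vert_\infty\,M_2$. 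Your splitting $b_1\sqrt{\varphi_{\omega(\eps)}}\cdot\sqrt{\varphi_{\omega(\eps)}}$ is really Jensen's inequality for the probability measure $\varphi_{\omega(\eps)}\,dy$: it uses $\int\varphi_{\omega(\eps)}=1$ directly, avoids any tracking of the support size, and delivers the sharp constant $\tilde M_2=M_2$. Your closing remark is also apt: nothing in the argument uses the $B^\infty$ regularity, only $\varphi\ge 0$, $\int\varphi=1$, and the pointwise bound $b_1^2\le M_2 a$.
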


\begin{proof}
We begin by writing 
\beq
 \label{estimate1}
 \begin{split}
&b_{1,\eps}(x)^2 = \left|\, \int\displaylimits_{\supp(\varphi_{\omega(\eps)})} b_{1}(x-\xi)\varphi_{\omega(\eps)} (\xi) d\xi \right|^2 \le \left(\,\int\displaylimits_{\supp(\varphi_{\omega(\eps)})} \left| b_{1}(x-\xi) \right|\varphi_{\omega(\eps)} (\xi)  d\xi \right)^2 \\
&\le \left(\,\int\displaylimits_{\supp(\varphi_{\omega(\eps)})} M^{\frac{1}{2}}_2 a^{\frac{1}{2}}(x-\xi) \varphi_{\omega(\eps)} (\xi)  d\xi \right)^2  \le \mu (\supp(\varphi_{\omega(\eps)}))\int\displaylimits_{\supp(\varphi_{\omega(\eps)})} M_2 a(x-\xi) \varphi_{\omega(\eps)} (\xi)^2  d\xi  
\end{split}
\eeq
where in the last step we used Hölder's inequality.  The right-hand side of \eqref{estimate1} can be estimated by 
\begin{align*}
&= \omega(\eps)C M_2 \int\displaylimits_{\supp(\varphi_{\omega(\eps)})}  a(x-\xi) \varphi_{\omega(\eps)} (\xi)^2  d\xi\\
&= \omega(\eps)C M_2 \int\displaylimits_{\supp(\varphi_{\omega(\eps)})}  a(x-\xi) \varphi_{\omega(\eps)} (\xi)\frac{1}{\omega(\eps)}\varphi \left(\frac{\xi}{\omega(\eps)}\right)  d\xi\\
&\le C M_2 \int\displaylimits_{\supp(\varphi_{\omega(\eps)})}  a(x-\xi) \varphi_{\omega(\eps)} (\xi)\tilde{C} d\xi = \tilde{M_2} a\ast \varphi_{\omega(\eps)}(x) = \tilde{M_2} a_{\eps}(x),
\end{align*}
for some constant $C>0$ independent of $\eps$.
\end{proof}

From Proposition \ref{prop_Glaeser_mollified} (iii), $M_{1}= \Vert\partial^2 a\Vert_\infty$ and $\Vert a_{\eps}\Vert_\infty\le  \Vert a\Vert_\infty$, $\Vert b_{2,\eps}\Vert_\infty\le  \Vert b_{2}\Vert_\infty$, $\Vert b_{3,\eps}\Vert_\infty\le  \Vert b_{3}\Vert_\infty$ from Proposition \ref{lemma_mollification} (iii). The above calculations in combination with \eqref{C_2_3x3_Case 1}, \eqref{est_U_2_3x3_Case1}, Proposition \ref{prop_rhs_mod} and Proposition \ref{prop_consistency} show that we have proven the following theorem.
\begin{theorem}
\label{theo_main_case_3}
Let us consider the Cauchy problem
\beq
\label{CP_1_ex_case3}
\begin{split}
\partial_t^2u-a(x)\partial^2_{x} u+ b_1(x)\partial_x u+ b_2(x)\partial_t u + b_3(x) u&=f(t,x),\\
u(0,x)&=g_0,\\
\partial_tu(0,x)&=g_1,
\end{split}
\eeq
where $t\in[0,T]$, $x\in\R$.
Assume the following set of hypotheses (Case 3):
\begin{itemize}
\item[(i)] the coefficients $a, b_1, b_2, b_3\in  B^{\infty}(\R)$ and $a(x)\ge 0$ for all $x\in\R$,
\item[(ii)] $f\in C([0,T],C_c^{\infty}(\R))$,
\item[(iii)] $g_0,g_1\in  C_c^{\infty}(\R)$,
\end{itemize}
and that the Levi condition $b_1(x)^2\le M_2a(x)$ holds uniformly in $x$. If we regularise 
\[
\begin{split}
&a_{\eps}= a\ast \varphi_{\eps},\,
b_{1,\eps}= b_1\ast \varphi_{\eps},\,
b_{2,\eps}= b_2\ast \varphi_{\eps},\,
b_{3,\eps}= b_3\ast \varphi_{\eps},\,\\
&f_\eps = f(t,\cdot)\ast\varphi_\eps,\,
g_{0,\eps}=g_0\ast\varphi_\eps,\,
g_{1,\eps}=g_1\ast\varphi_\eps,
\end{split}
\] 
then the Cauchy problem has a very weak solution of Sobolev order $0$.
\end{theorem}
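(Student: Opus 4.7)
The plan is to regularise the Cauchy problem via convolution with $\varphi_\eps$ as prescribed, invoke the smooth well-posedness result Theorem~\ref{theo_main} to obtain a smooth net $(u_\eps)_\eps$ solving \eqref{CP_1_ex_reg}, and then apply the energy estimate \eqref{est_U_2_3x3_Case1} with the constant $C_\eps$ given by \eqref{C_2_3x3_Case 1}. The whole task reduces to showing that under the Case 3 hypotheses, both $C_\eps$ and the data norms appearing on the right-hand side of \eqref{est_U_2_3x3_Case1} are bounded by a constant independent of $\eps$; this will give an $\eps$-uniform (hence a fortiori moderate of order $0$) estimate for $\Vert u_\eps(t)\Vert_{L^2}$.

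To bound $C_\eps$ uniformly, I would go through each ingredient of \eqref{C_2_3x3_Case 1} in turn. By Proposition~\ref{lemma_mollification}(iii), the sup-norms $\Vert a_\eps\Vert_\infty$, $\Vert b_{2,\eps}\Vert_\infty$, $\Vert b_{3,\eps}\Vert_\infty$, $\Vert a''_\eps\Vert_\infty$ are controlled by the corresponding $B^\infty$-norms of $a$, $b_2$, $b_3$, $a''$, none of which depend on $\eps$. By Proposition~\ref{prop_Glaeser_mollified}(iii), the Glaeser constant $M_{1,\eps}$ can be replaced by the $\eps$-independent constant $M_1=\Vert a''\Vert_\infty$. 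Finally, by Proposition~\ref{prop_consistency}, the assumed classical Levi condition $b_1^2\le M_2 a$ transfers to the regularised Levi condition $b_{1,\eps}^2\le \tilde M_2 a_\eps$ with $\tilde M_2$ independent of $\eps$, which feeds into \eqref{Levi_mollified_Case1}. Putting all of this into \eqref{C_2_3x3_Case 1}, the exponent $c_\eps$ stays bounded and therefore $C_\eps\le C$ uniformly in $\eps\in(0,1]$.

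For the data side, since $g_0,g_1\in C_c^\infty(\R)$ and $f\in C([0,T],C_c^\infty(\R))$, the mollified nets satisfy the trivial uniform bounds
\[
\Vert g_{0,\eps}\Vert_{H^1}\le \Vert g_0\Vert_{H^1},\qquad \Vert g_{1,\eps}\Vert_{L^2}\le \Vert g_1\Vert_{L^2},\qquad \Vert f_\eps(s,\cdot)\Vert_{L^2}\le \Vert f(s,\cdot)\Vert_{L^2},
\]
uniformly in $s\in[0,T]$ and $\eps\in(0,1]$, by Young's inequality for convolutions. Substituting into \eqref{est_U_2_3x3_Case1} yields
\[
\Vert u_\eps(t)\Vert_{L^2}^2\le C\Bigl(\Vert g_0\Vert_{H^1}^2+\Vert g_1\Vert_{L^2}^2+T\sup_{s\in[0,T]}\Vert f(s,\cdot)\Vert_{L^2}^2\Bigr),
\]
for all $t\in[0,T]$ and $\eps\in(0,1]$. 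This is exactly the $L^2$-moderateness estimate required for $(u_\eps)_\eps$ to be a very weak solution of Sobolev order $0$, with the rate $\eps^{-N}$ realised by $N=0$.

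The only conceptually nontrivial step is the transfer of the Levi condition from $b_1$ to $b_{1,\eps}$, but this is precisely the content of Proposition~\ref{prop_consistency}, so there is no genuine obstacle: everything else is accounting of $\eps$-independent bounds afforded by Proposition~\ref{lemma_mollification}(iii) and Proposition~\ref{prop_Glaeser_mollified}(iii). In particular, unlike Cases 1 and 2, no logarithmic scale $\omega(\eps)$ nor an $\eps$-dependent Levi condition is needed here, and one can simply take $\omega(\eps)=\eps$.
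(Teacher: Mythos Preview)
Your proposal is correct and follows essentially the same approach as the paper: the paper's argument for Theorem~\ref{theo_main_case_3} is precisely to feed Proposition~\ref{lemma_mollification}(iii), Proposition~\ref{prop_Glaeser_mollified}(iii), and Proposition~\ref{prop_consistency} into the constant \eqref{C_2_3x3_Case 1} and the estimate \eqref{est_U_2_3x3_Case1}. Your version is slightly more explicit in invoking Theorem~\ref{theo_main} for the existence of each $u_\eps$ and in using Young's inequality directly for the data bounds (rather than Proposition~\ref{prop_rhs_mod}), but the logic is identical.
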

Theorem \ref{theo_main_case_3} proves the existence of a very weak solution when the equation coefficients are regular. Now, we want to prove that in this case every very weak solution converges to the classical solution as $\eps\to 0$. This result  holds independently of the choice of regularisation, i.e., mollifier and scale.

\begin{theorem}
\label{theo_consistency}
Let 
\beq
\label{CP_cons}
\begin{split}
\partial_t^2u-a(x)\partial^2_{x} u+ b_1(x)\partial_x u+ b_2(x)\partial_t u + b_3(x) u&=f(t,x),\quad t\in[0,T], x\in\R,\\
u(0,x)&=g_0,\\
\partial_tu(0,x)&=g_1,
\end{split}
\eeq
where $a, b_1, b_2, b_3\in B^{\infty}(\R)$ and $a \geq 0$. We also assume that all the functions involved in the system are real-valued. Let $g_0$ and $g_1$ belong to $C_c^{\infty}(\R)$ and $f \in C([0,T];C_c^{\infty}(\R))$. Suppose the Levi condition 
\begin{align*}
\label{THM:lot2_general}
b_{1}(x)^2 \le M_2 a(x)
\end{align*}
holds. Then, every very weak solution $(u_\eps)_\eps$ of $u$ converges in  $C([0,T], L^2(\R))$ as $\eps\to0$ to the unique classical solution of the Cauchy problem \eqref{CP_cons};
\end{theorem}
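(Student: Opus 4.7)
Let $u$ denote the unique classical solution of \eqref{CP_cons} provided by Corollary \ref{cor_C_infty}, and let $(u_\eps)_\eps$ be an arbitrary very weak solution, associated with regularisations $a_\eps=a\ast\varphi_\eps$ and so on. I would set $w_\eps := u_\eps-u$ and derive the Cauchy problem it satisfies. Subtracting the equation for $u_\eps$ from that for $u$ and grouping the mollification errors on the right yields
\[
\partial_t^2 w_\eps-a_\eps\partial_x^2 w_\eps+b_{1,\eps}\partial_x w_\eps+b_{2,\eps}\partial_t w_\eps+b_{3,\eps}w_\eps=n_\eps(t,x),
\]
with initial data $w_\eps(0,\cdot)=g_{0,\eps}-g_0$, $\partial_t w_\eps(0,\cdot)=g_{1,\eps}-g_1$, where
\[
n_\eps=(f_\eps-f)+(a_\eps-a)\partial_x^2 u-(b_{1,\eps}-b_1)\partial_x u-(b_{2,\eps}-b_2)\partial_t u-(b_{3,\eps}-b_3)u.
\]
The equation for $w_\eps$ has exactly the Case 3 structure, so the energy machinery of Section \ref{sec_Oleinik} applies.

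Next, I would apply the Sobolev estimate \eqref{est_u_H_k_1} with $k=-1$ to $w_\eps$, obtaining
\[
\Vert w_\eps(t)\Vert_{L^2}^2\le C_\eps\Bigl(\Vert g_{0,\eps}-g_0\Vert_{H^1}^2+\Vert g_{1,\eps}-g_1\Vert_{L^2}^2+\int_0^t\Vert n_\eps(s)\Vert_{L^2}^2\,ds\Bigr).
\]
The crucial observation is that the constant $C_\eps$ can be chosen uniformly in $\eps$. Indeed, by Proposition \ref{lemma_mollification}(iii), for each $\alpha\in\N$ the $L^\infty$-norm of $\partial^\alpha a_\eps,\partial^\alpha b_{j,\eps}$ is bounded by $\Vert \partial^\alpha a\Vert_\infty,\Vert \partial^\alpha b_j\Vert_\infty$, and by Proposition \ref{prop_consistency} the Levi constant for $b_{1,\eps}$ can be chosen independent of $\eps$. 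Tracing through the dependencies described in Proposition \ref{prop_est_Sob_k}, all ingredients entering $C_\eps$ are uniformly controlled, so $C_\eps\le C$ for some constant $C>0$ not depending on $\eps$.

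It then remains to show that each term on the right-hand side tends to $0$ as $\eps\to 0$, uniformly in $t\in[0,T]$. Since $g_0,g_1\in C_c^\infty(\R)$, standard mollifier convergence gives $g_{0,\eps}\to g_0$ in $H^1$ and $g_{1,\eps}\to g_1$ in $L^2$; the same argument applied pointwise in $t$, combined with the continuity of $f:[0,T]\to C_c^\infty(\R)$, yields $\Vert f_\eps-f\Vert_{L^2}\to 0$ uniformly on $[0,T]$. For the coefficient errors I would use the finite speed of propagation: $u\in C^2([0,T],C_c^\infty(\R))$ is supported in a fixed compact set $K\subset\R$ for all $t\in[0,T]$, hence so are $\partial_x^2 u,\partial_x u,\partial_t u,u$. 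Since $a,b_j$ are continuous and bounded, $a_\eps\to a$ and $b_{j,\eps}\to b_j$ uniformly on $K$, giving $\Vert(a_\eps-a)\partial_x^2 u\Vert_{L^2}\le\Vert a_\eps-a\Vert_{L^\infty(K)}\Vert\partial_x^2 u\Vert_{L^2}\to 0$, and analogously for the other terms, uniformly in $t\in[0,T]$. Combining these convergences yields $\sup_{t\in[0,T]}\Vert w_\eps(t)\Vert_{L^2}\to 0$, which is the claimed convergence in $C([0,T],L^2(\R))$.

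The only mildly delicate step is the $\eps$-uniform control of the constant $C_\eps$; everything else is a routine consequence of the energy estimate already proved and of standard properties of convolution with a mollifier. The argument does not use a specific mollifier or scale, so the conclusion is independent of the chosen regularisation.
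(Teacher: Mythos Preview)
Your proof is correct and follows essentially the same approach as the paper: both subtract the classical solution from the very weak solution, rewrite the resulting equation with regularised coefficients and a remainder $n_\eps$, and apply the $L^2$ energy estimate of Section~\ref{sec_Oleinik} with an $\eps$-independent constant obtained via Proposition~\ref{lemma_mollification}(iii) and Proposition~\ref{prop_consistency}. The only minor difference is that you keep the regularised initial data and handle the extra terms $\Vert g_{0,\eps}-g_0\Vert_{H^1}$, $\Vert g_{1,\eps}-g_1\Vert_{L^2}$, whereas the paper simplifies by not regularising the already smooth $g_0,g_1$, so that the difference has zero initial data.
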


\begin{proof}
 Let $\wt{u}$ be the classical solution. By definition we have
\beq
\label{CP_class}
\begin{split}
\partial_t^2 \wt{u}-a(x)\partial^2_{x} \wt{u}+ b_{1}(x)\partial_x \wt{u}+ b_{2}(x)\partial_t \wt{u} + b_{3}(x) \wt{u}&=f(t,x),\quad t\in[0,T], x\in\R,\\
\wt{u}(0,x)&=g_{0},\\
\partial_t \wt{u}(0,x)&=g_{1}.
\end{split}
\eeq 
By the finite speed of propagation of hyperbolic equations, $\wt{u}$ is compactly supported with respect to $x$. As before, we regularise \eqref{CP_cons} and by Proposition \ref{prop_consistency}, the Levi condition also holds for the regularised coefficients with a constant independent of $\eps$. Note that we do not need to regularise the initial data as they are already smooth. Hence, there exists a very weak solution $(u_\eps)_\eps$ of $u$ such that
\beq
\label{CP_class_2}
\begin{split}
\partial_t^2 u_{\eps}-a_{\eps}(x)\partial^2_{x} u_{\eps}+ b_{1,\eps}(x)\partial_x u_{\eps}+ b_{2,\eps}(x)\partial_t u_{\eps} + b_{3,\eps}(x) u_{\eps}&=f_{\eps}(t,x),\quad t\in[0,T], x\in\R,\\
u_{\eps}(0,x)&=g_{0},\\
\partial_t u_{\eps}(0,x)&=g_{1},
\end{split}
\eeq
for suitable embeddings of the coefficients.  We can therefore rewrite \eqref{CP_class} as
\beq
\label{CP_class_3}
\begin{split}
\partial_t^2 \wt{u}-a_{\eps}(x)\partial^2_{x} \wt{u}+ b_{1,\eps}(x)\partial_x \wt{u}+ b_{2,\eps}(x)\partial_t \wt{u} + b_{3,\eps}(x) \wt{u}&=f_{\eps}(t,x)+n_\eps(t,x),\\
\wt{u}(0,x)&=g_{0},\\
\partial_t \wt{u}(0,x)&=g_{1},
\end{split}
\eeq 
where
\[
n_\eps(x)=(f-f_{\eps})+(a-a_{\eps})\partial^2_x \wt{u}-(b_1-b_{1,\eps})\partial_x \wt{u}-(b_2-b_{2,\eps})\partial_t\wt{u}-(b_3-b_{3,\eps})\wt{u},
\]
and   $n_\eps \in C([0,T], L^2(\R))$ and converges to $0$ in this space. Note that here we have used the fact that all the terms defining $n_\eps$ have $L^2$-norm which can be estimated by $\eps$ uniformly with respect to $t\in[0,T]$. Hence, $\Vert n_\eps\Vert_{L^2}=O(\eps)$ and $n_\eps$ converge to $0$ in $C([0,T], L^2(\R))$ as $\eps\to 0$.  From \eqref{CP_class_3} and \eqref{CP_class_2} we obtain that $\wt{u}-u_\eps$ solves the equation
\[
\begin{split}
\partial_t^2 (\wt{u}-u_\eps)&-a_{\eps}(x)\partial^2_{x} (\wt{u}-u_\eps)\\
&+ b_{1,\eps}(x)\partial_x (\wt{u}-u_\eps)+ b_{2,\eps}(x)\partial_t (\wt{u}-u_\eps) + b_{3,\eps}(x) (\wt{u}-u_\eps)=n_\eps(t,x),
\end{split}
\]
which fulfils the initial conditions
\[
\begin{split}
(\wt{u}-u_\eps)(0,x)&=0,\\
(\partial_t \wt{u}- \partial_t u_\eps)(0,x)&=0.
\end{split}
\]
Following the energy estimates Section \ref{section_classical} and as in the proof of Theorem \ref{theo_main_case_3}, after reduction to a system,  we have an estimate of  $\Vert\wt{u}- u_{\eps}\Vert_{L^2}^2$ as in \eqref{est_U_2_3x3_Case1}, in terms of  $(\wt{u}-u_\eps)(0,x)$, $(\partial_t \wt{u}- \partial_t u_\eps)(0,x)$ and the right-hand side $n_\eps(t,x)$. In particular, since $(\wt{u}-u_\eps)(0,x)=(\partial_t \wt{u}- \partial_t u_\eps)(0,x)=0$ and noting that the constant $C_\eps$ in \eqref{est_U_2_3x3_Case1} is independent of $\eps$ in this case,  we simply get
\begin{align*} \label{impor_est_const}
\Vert\wt{u}- u_{\eps}\Vert_{L^2}^2\le C \int_{0}^t\Vert n_{\eps}(s)\Vert_{L^2}^2\,ds.
\end{align*} 
Since   $n_\eps\to 0$ in  $ C([0,T], L^2(\R))$, we conclude that $u_\eps\to \wt{u}$ in  $ C([0,T], L^2(\R))$. Note that our argument is independent of the choice of the regularisation of the coefficients and the right-hand side. 

\end{proof}
Concluding, we have proven the existence of very weak solutions of Sobolev order $0$ since in the next sections we will mainly work on $L^2$-norms. However, very weak solutions of higher Sobolev order can also be obtained with a suitable choice of nets involved in the regularisation process by following the techniques developed in Section \ref{sec_Oleinik}. In analogy to Section 7 in \cite{DGL:22} it is immediate to see from the estimate \eqref{est_U_2_3x3_Case1} that perturbing the equation's coefficients, right-hand side and initial data with negligible needs leads to a negligible change in the solution, where with negligible we intend negligible net of Sobolev order $0$. So, we can conclude that our Cauchy problem is very weakly well-posed.

\section{Toy models and numerical experiments} \label{Section_piecewise_sol}
The final section of this paper is devoted to a wave equation toy model with discontinuous space-dependent coefficient. This complements the study initiated in \cite{DGL:22} where the equation's coefficient was depending on time only. In detail, we consider the Cauchy problem 
\beq
\label{CP_heaviside_Deguchi}
\begin{split}
\partial_t^2u(t,x)-\partial_x (H(x)  \partial_{x} u(t,x))&=0,\qquad t\in[0,T],\, x\in\R,\\
u(0,x)&=g_0(x),\qquad x\in\R,\\
\partial_t u(0,x)&=g_1(x), \qquad x\in\R,
\end{split}
\eeq
where $g_0,\, g_1 \in C_c^{\infty}(\R)$ and $H$ is the Heaviside function with jump at $x=0$. Classically, we can solve this Cauchy problem piecewise to obtain the piecewise distributional solution $\bar{u}$. First for $x<0$, then for $x>0$ and then combining these two solutions together under the transmission conditions that $\bar{u}$, $\partial_t \bar{u}$, $H\partial_x \bar{u}$ are continuous across $x=0$, as in \cite{GO:14}. Problems of this type have been investigated in \cite{DO:16} for a jump function between two positive constants. Here, we work with the Heaviside function which is equal to zero before the jump and then identically $1$.  

In the next subsection we describe how the classical piecewise distributional solution $\bar{u}$ of the Cauchy problem \eqref{CP_heaviside_Deguchi} is defined.

\subsection{Classical piecewise distributional solution}
 
In the region defined by $x<0$, our equation becomes 
\[
\partial^2_tu=0.
\]
Integrating once we get $\partial_tu=g_1(x)$. Integrating once more, we get $u(t,x)=tg_1(x)+g_0(x)$. From this we obtain that $\partial_tu(t,x)=g_1(x)$ and $\partial_xu(t,x)=tg_1'(x)+g_0'(x)$.
We first do a transformation into a system
\[v=w=\partial_t u, \quad v_0(x)=w_0(x)=g_1(x). \]

Note that from the equation we get that $\partial_t v =\partial_t w=0 $ and hence we have that $v(t,x)=w(t,x)=g_1(x)$ for $x<0$.

In the region $x>0$, the equation becomes 
\[
\partial^2_tu-\partial_x^2u=0.
\]
We first do a transformation into a system
\begin{align*}
v=(\partial_t -\partial_x)u, \quad &w=(\partial_t +\partial_x)u ,\\
v_0(x)=g_1(x)-g_0'(x),\quad &w_0(x)=g_1(x)+g_0'(x).
\end{align*}
From this we can recover that
\[
\partial_t u=\frac{1}{2}(v+w), \quad \partial_x u=\frac{1}{2}(w-v).
\]

Note that the system satisfies
\begin{align*}
&(\partial_t+\partial_x)v=0 \\
&(\partial_t-\partial_x)w=0 \\
&v(0,x)=v_0(x), w(0,x)=w_0(x).
\end{align*}

We therefore have that $v(t,x)=v_0(x-t)$ for $x \ge t $ and $w(t,x)=w_0(x+t)$ when $0 \le x < t $   or $x \ge t $ (i.e. $x\ge 0$).

We now compare the values of $v$ and $w$ on both sides of $x=0$. We denote by $v_-,w_-$ and $v_+,w_+$ the values of $v,w$ for $x<0$ and $x>0$, respectively.
Using the conditions that the values of $u_t$ and $u_x$ should not jump across $x=0$, we get that
\begin{align*}
v_+ + w_+ &= v_- + w_- (=2v_-=2w_-) \\
w_+ -v_+ &= 0
\end{align*}
along $x=0$.
Substituting the value of $w_+$ and $v_-$ we get

\begin{align*}
v_+ + g_1(t) +g_0'(t) &= 2g_1(0) \\
g_1(t) +g_0'(t) -v_+ &= 0.
\end{align*}

Rearranging with respect to $v_+$, we have

\begin{align*}
v_+  &= 2g_1(0) - g_1(t) - g_0'(t) \\
v_+ &=  g_1(t) +g_0'(t) .
\end{align*}

This gives us the condition 
\beq \label{condition2}
 g_1(t) +g_0'(t) -g_1(0) =0.
\eeq
This condition on the initial data can also be found in \cite[Section 2]{B:84}. Concluding we get that,

\begin{equation*} 
v(t,x)=
\begin{cases}
g_1(x), \quad  &\text{for } x < 0,\\
2g_1(0)-g_1(t-x) -g_0'(t-x) , \quad &\text{for } 0\le x <t  ,\\
v_0(x-t) = g_1 (x-t) -g_0'(x-t) , \quad &\text{for } x \ge t,
\end{cases}
\end{equation*}

and

\begin{equation*} 
w(t,x)=
\begin{cases}
g_1(x), \quad  &\text{for } x < 0,\\
g_1(x+t)+g_0'(x+t), \quad &\text{for } x \ge 0 .
\end{cases}
\end{equation*}

Using that $\partial_t u =\frac{v+w}{2}$,  we get that 
\beq \label{solution_t_heaviside}
u_t(t,x)=
\begin{cases}
g_1(x), \quad  &\text{for } x < 0,\\
\frac{1}{2} (g_0'(x+t)-g_0'(t-x)) +\frac{1}{2} (g_1(x+t)-g_1(t-x)) +g_1(0) , \quad &\text{for } 0\le x < t ,\\
\frac{1}{2} (g_0'(x+t)-g_0'(x-t))+\frac{1}{2} (g_1(x+t)+g_1(x-t))  , \quad &\text{for } x \ge t.
\end{cases}
\eeq

Note that \eqref{solution_t_heaviside} is continuous on the lines $x=0$ and $x=t$.  

Integrating and using the initial conditions, we conclude that the solution of \eqref{CP_heaviside_Deguchi} for $t\in[0,T] $ is

\beq \label{solution_heaviside}
\bar{u}(t,x)=
\begin{cases}
g_0(x)+tg_1(x), \quad &\text{for } x < 0,\\
\frac{1}{2}(g_0(x+t)-g_0(t-x))+\frac{1}{2}\int_{t-x}^{x+t}g_1(s)ds +g_0(0)+(t-x)g_1(0), \quad &\text{for } 0 \le x < t,\\
\frac{1}{2}(g_0(x+t)+g_0(x-t))+\frac{1}{2}\int_{x-t}^{x+t}g_1(s)ds, \quad &\text{for } x \ge t. 
\end{cases}
\eeq
Note that \eqref{solution_heaviside} is continuous on the lines $x=0$ and $x=t$ and hence $u \in C^1([0,T]\times \R )$.
Furthermore we have that $\bar{u}(0,x)=g_0(x)$ and $\partial_t\bar{u}(0,x)=g_1(x)$.

The Cauchy problem \eqref{CP_heaviside_Deguchi} can also be studied via the very weak solutions approach. In the sequel, we prove that every weak solution obtained after regularisation will recover the distributional solution $\bar{u}$ in the limit as $\eps\to 0$.
\subsection{Consistency of the very weak solution approach with the classical one}
Let
\beq
\label{CP_gen_reg}
\begin{split}
\partial_t^2u_{\eps}(t,x)-\partial_x (H_{\eps}(x) \partial_x u_{\eps}(t,x))&=0,\qquad t\in[0,T], x\in\R,\\
u_{\eps}(0,x)&=g_0(x),\qquad x\in\R,\\
\partial_t u_{\eps}(0,x)&=g_1(x), \qquad x\in\R,
\end{split}
\eeq
where $H_{\eps}=H\ast \varphi_{\omega(\eps)}$ with $\varphi_{\omega(\eps)}(x)=\eps^{-1}\varphi(x/\eps)$ and $\varphi$ is a mollifier ($\varphi\in C^\infty_c(\R^n)$, $\varphi\ge 0$ with $\int\varphi=1$). Assume that $\omega(\eps)$ is a positive scale with  $\omega(\eps)\to 0$ as $\eps\to 0$ and that the initial data belong to $C^\infty_c(\R)$.

\begin{theorem} \label{thm_convergence}
Every very weak solution of Cauchy problem \eqref{CP_heaviside_Deguchi} converges to the piecewise distributional solution $\bar{u}$, as $\eps \to 0$.  
\end{theorem}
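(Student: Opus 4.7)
The plan is to define $v_\eps = u_\eps - \bar u$ and prove $v_\eps \to 0$ in $C([0,T];L^2(\R))$ by an energy estimate for the regularised equation. First I would verify that $\bar u$ is a distributional solution of \eqref{CP_heaviside_Deguchi}: because $\bar u$ and $\partial_t\bar u$ are continuous and the transmission condition $\partial_x\bar u(t,0^+)=0$ (which is exactly \eqref{condition2}) makes $H\partial_x\bar u$ continuous across $x=0$, one checks that $\partial_t^2\bar u=\partial_x(H\partial_x\bar u)$ holds distributionally. Subtracting this from the regularised equation satisfied by $u_\eps$, $v_\eps$ satisfies
\[
\partial_t^2 v_\eps - \partial_x\bigl(H_\eps\,\partial_x v_\eps\bigr)=R_\eps,\qquad v_\eps(0,\cdot)=\partial_t v_\eps(0,\cdot)=0,
\]
with $R_\eps=\partial_x\bigl((H_\eps-H)\partial_x\bar u\bigr)$ in the sense of distributions.

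The natural energy is $E_\eps(t)=\tfrac12\int\bigl(|\partial_t v_\eps|^2+H_\eps(\partial_x v_\eps)^2\bigr)\,dx$ with $E_\eps(0)=0$ and $\tfrac{d}{dt}E_\eps=\langle R_\eps,\partial_t v_\eps\rangle$. The key technical point is that the function $(H_\eps-H)\partial_x\bar u$ has a jump at $x=0$ of size $-H_\eps(0)\bigl(g_0'(0)+tg_1'(0)\bigr)$ coming from the jump of $\partial_x\bar u$, so $R_\eps$ carries a $\delta_0$ mass of the same size. I would split the pairing as $\int_{-\infty}^{0^-}+\int_{0^+}^{\infty}$ and integrate by parts on each half-line. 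The boundary term at $x=0^+$ vanishes thanks to $\partial_x\bar u(t,0^+)=0$, while the boundary term at $x=0^-$ is exactly $H_\eps(0)(g_0'(0)+tg_1'(0))\,\partial_t v_\eps(t,0)$ — which cancels identically against the $\delta_0$ contribution. One is left with the clean identity
\[
\langle R_\eps,\partial_t v_\eps\rangle = -\int_\R(H_\eps-H)\,\partial_x\bar u\,\partial_t\partial_x v_\eps\,dx.
\]

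Next I would move the $\partial_t$ outside by writing this as $-\tfrac{d}{dt}\!\int(H_\eps-H)\partial_x\bar u\,\partial_x v_\eps\,dx + \int(H_\eps-H)\partial_t\partial_x\bar u\,\partial_x v_\eps\,dx$ and integrate over $[0,T]$, using $v_\eps(0,\cdot)=0$. The decisive estimate is the subordination
\[
|H_\eps(x)-H(x)|\le C_\varphi\sqrt{H_\eps(x)},
\]
which holds uniformly in $\eps$ because $H_\eps(0)=\int_{-\infty}^0\varphi(u)\,du$ is a positive constant independent of $\eps$ and $H_\eps$ is monotone, so $H_\eps(x)\ge H_\eps(0)$ for $x\ge 0$ while $|H_\eps-H|=H_\eps$ for $x<0$. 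Combining this with the fact that $\mathrm{supp}(H_\eps-H)$ has measure $O(\omega(\eps))$ and applying Cauchy--Schwarz, each of the two right-hand integrals is bounded by $C\,\omega(\eps)^{1/2}\sqrt{E_\eps(t)}$ (with $C$ depending on $\|\partial_x\bar u\|_\infty$ and $\|\partial_t\partial_x\bar u\|_\infty$, both finite since $g_0,g_1\in C_c^\infty$). Hence $E_\eps(T)\le C\omega(\eps)^{1/2}\bigl[\sqrt{E_\eps(T)}+T\sup_{[0,T]}\sqrt{E_\eps}\bigr]$; taking the supremum and absorbing $\sqrt{E_\eps}$ yields $\sup_{[0,T]}\sqrt{E_\eps}\lesssim\omega(\eps)^{1/2}$. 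Then $\|\partial_t v_\eps(t)\|_{L^2}\to 0$ uniformly in $t$, and $\|v_\eps(t)\|_{L^2}\le\int_0^t\|\partial_t v_\eps(s)\|_{L^2}\,ds\to 0$, which is the convergence $u_\eps\to\bar u$ in $C([0,T];L^2(\R))$.

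The main obstacle is exactly the distributional nature of $R_\eps$: it is not small in any $L^p$ norm because of its $\delta_0$ component. The resolution depends on two reinforcing ingredients that are not visible at first glance — the transmission condition $\partial_x\bar u(t,0^+)=0$, which causes the $0^+$ boundary term in the split IBP to vanish, and the algebraic cancellation between the remaining $0^-$ boundary term and the $\delta_0$ jump of $(H_\eps-H)\partial_x\bar u$; these conspire to produce an integrand that can be estimated via the subordination $|H_\eps-H|\lesssim\sqrt{H_\eps}$, an identity that is specific to the Heaviside coefficient and the natural degenerate energy $\int H_\eps|\partial_x v_\eps|^2$. The argument is independent of the particular choice of mollifier and scale $\omega(\eps)$, so it applies to every very weak solution produced by the regularisation procedure.
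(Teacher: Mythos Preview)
Your approach is correct and genuinely different from the paper's. The paper argues by compactness: it uses the conservation laws $\tfrac{d}{dt}\int(|\partial_t u_\eps|^2+H_\eps|\partial_x u_\eps|^2)=0$ (and the analogous one for $\partial_t^2 u_\eps$) to get uniform bounds on $(u_\eps)$ in $C^1([0,T];L^2)$, applies Arzel\`a--Ascoli to extract a convergent subsequence, and then identifies the limit as $\bar u$ by showing it solves the equation on each side of $x=0$ and invoking uniqueness of the piecewise distributional solution. Your route is a direct, quantitative energy estimate on $v_\eps=u_\eps-\bar u$: the subordination $|H_\eps-H|\le C_\varphi\sqrt{H_\eps}$ lets you absorb the forcing into the degenerate energy and yields the explicit rate $\sup_t\|u_\eps-\bar u\|_{L^2}=O(\omega(\eps)^{1/2})$, which the compactness argument cannot see.

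Two comments. First, the cancellation you describe at $x=0$ is in fact automatic for any piecewise smooth $G$ paired with a continuous test function (the $\delta_0$ mass always cancels the boundary terms); the transmission condition $\partial_x\bar u(t,0^+)=0$ is really used earlier, to make $\bar u$ a \emph{distributional} solution so that the equation for $v_\eps$ holds. Second, your claim of mollifier-independence is slightly overstated: the subordination needs $H_\eps(0)=\int_{-\infty}^0\varphi>0$, which fails if $\supp\varphi\subset[0,\infty)$. In that edge case $H_\eps(x)$ can vanish on $\{x\ge 0\}$ while $1-H_\eps$ stays near $1$, and your bound breaks down as written. This is easily repaired (e.g.\ by exploiting that $\partial_x\bar u$ vanishes at $0^+$, in fact to infinite order under \eqref{condition2}, so the integrand is small there anyway), but you should either add the hypothesis on $\varphi$ or include this patch. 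The paper's compactness proof does not need any such assumption on the mollifier.
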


\begin{proof}
Let $(u_\eps)_\eps$ be a very weak solution of the Cauchy problem \eqref{CP_heaviside_Deguchi}, i.e. a net of solutions of the Cauchy problem \eqref{CP_gen_reg} which fulfills moderate $L^2$-estimates.  Since the initial data are compactly supported, from the finite speed of propagation,  it is not restrictive to assume that $(u_\eps)$ vanishes for $x$ outside some compact set K, independently of $t \in [0,T]$ and $\eps\in (0,1]$.  By construction we have that $u_\eps\in C^{\infty}([0,T], C_c^{\infty}(\R))$. We now argue as in the proof of Proposition 6.8 in \cite{GO:14}. We first  multiply the equation $u_{\eps,tt} - (H_\eps u_{\eps,x})_x=0 $ by $u_{\eps,t}$ and integrate to get the conservation law
\beq \label{conservation1}
\frac{1}{2}\frac{d}{dt} \int_\R \left( |u_{\eps,t}|^2 +H_{\eps} |u_{\eps,x}|^2\right) dx=0 \quad \text{ for } t \in [0,T].
\eeq 
When we differentiate the wave equation once with respect to time, multiply by $u_{\eps,tt}$ and integrate, we get
\beq \label{conservation2}
\frac{1}{2}\frac{d}{dt} \int_\R \left( |u_{\eps,tt}|^2 +H_{\eps} |u_{\eps,xx}|^2\right) dx=0 \quad \text{ for } t \in [0,T].
\eeq 
The regularity of the initial data and \eqref{conservation1}, imply that $\int_\R|u_{\eps,t}(t,x)|^2dx$ is bounded on $[0,T]$. We therefore have that $(u_\eps)_\eps$ is bounded in $C^1([0,T], L^2(\R))$. By the Mean Value Theorem, $(u_\eps)_\eps$ is equicontinuous in  $C([0,T], L^2(\R))$. Since $(u_\eps)_\eps$ is also bounded in $C([0,T], L^2(\R))$, by Arzela-Ascoli, $(u_\eps)_\eps$ is relatively compact in $C([0,T], L^2(\R))$.  From \eqref{conservation2}, we get that $\int_\R|u_{\eps,tt}(t,x)|^2dx$ is also bounded on $[0,T]$. By the same argument as above, we obtain that $(u_\eps)_\eps$ is relatively compact in $C^1([0,T], L^2(\R))$.
We have that $(u_\eps)_\eps$ and $(u_{\eps,t})_\eps$ are relatively compact in $C([0,T], L^2(\R))$. Furthermore, we have that $(u_{\eps,tt})_\eps$ is bounded in $ L^2([0,T] \times \R)$. Therefore, for a suitable subsequence, we have that there exist $U, U_1 \in  C([0,T], L^2(\R))$ and $U_2 \in L^2([0,T] \times \R)$ such that 
\begin{align*}
& u_\eps \rightarrow U, \, u_{\eps,t} \rightarrow U_1 \text{ strongly in } C([0,T], L^2(\R)) \subset C([0,T], \mathcal{D}'(\R)) \text{ and} \\
& u_{\eps,tt} \rightarrow U_2 \text{ weakly in } L^2([0,T] \times \R).
\end{align*}
From a routine argument we therefore have that $U_1=U_t$ and $U_2=U_{tt}$. The equation gives us that $(H_\eps u_{\eps,x})_x$ is also bounded in $ L^2([0,T] \times \R)$ and also converges weakly to  $U_2  \in L^2([0,T] \times \R)$. Due to the support properties, the convolution $V(t,\cdot)= U_2 \ast H $ exists. Therefore  $(H_\eps u_{\eps,x})_x\ast H \rightarrow U_2 \ast H =V(t,\cdot)$ weakly in  $ L^2([0,T] \times \R)$. Hence  $(H_\eps u_{\eps,x})_x\ast H \rightarrow V(t,\cdot)$ in  $ C([0,T], \mathcal{D}'(\R))$. Putting the derivative on the Heaviside, we get that $H_\eps u_{\eps,x}\ast \delta= H_\eps u_{\eps,x}\rightarrow  V$  in  $ C([0,T], \mathcal{D}'(\R))$.

It follows that 
\beq \label{almost_wave}
U_{tt}-V_x=0 \quad \quad \text{in }  C([0,T], \mathcal{D}'(\R)).
\eeq
Note that for every given $x>0$, we can find $\eps>0$ small enough such that  $H_\eps (x) = 1$. Similarly, for every given $x<0$,  $H_\eps (x) = 0$  for $\eps>0$ sufficiently small. Hence, taking $\varphi \in C^\infty_c ((-\infty,0))$, $\varphi=0$ in an open neighbourhood of $\supp(H_\eps u_{\eps,x})$ for $\eps$ small enough and therefore $\langle H_\eps u_{\eps,x},\varphi\rangle=0$. Analogously, taking $\varphi \in C^\infty_c ((0,\infty))$ we have that $\langle H_\eps u_{\eps,x},\varphi\rangle=\langle  u_{\eps,x},\varphi\rangle$, for $\eps$ small enough. Thus, in the sense of distributions, we have
\beq \label{equationV}
V(t,\cdot)=
\begin{cases}
0, &\text{ on }(-\infty,0), \\
U_x, &\text{ on }(0,\infty),
\end{cases}\quad\text{ and hence } \quad
V_x(t,\cdot)=
\begin{cases}
0, &\text{ on }(-\infty,0), \\
U_{xx}, &\text{ on }(0,\infty),
\end{cases}
\eeq
for almost all $t$. It follows, from \eqref{almost_wave} and \eqref{equationV}, that $U$ is a distributional solution to the Cauchy Problem \eqref{CP_heaviside_Deguchi} on both sides of $x=0$. Since $U \in C([0,T],\mathcal{D}'(\R))$, so is $(H U_{x})_x$ and hence from the equation, so is also $U_{tt}$. That is, $U$ is the unique piecewise distributional solution to the Cauchy problem \eqref{CP_heaviside_Deguchi}. Therefore $U=\bar{u}$. Since $(u_\eps)_\eps$ is equicontinuous in  $C^1([0,T], L^2(\R))$, we have that the whole net $(u_\eps)_\eps$ converges to $U=\bar{u}$.

\end{proof}

\subsection{Numerical model}
We conclude the paper by investigating the previous toy-model numerically. We provide numerical investigations of more singular toy-models by replacing the Heaviside coefficient with a delta of Dirac or an homogeneous distribution. Our analysis is aimed to have a better understanding of the corresponding very weak solutions as $\eps\to 0$.
\subsubsection{Heaviside}
We solve the Cauchy problem \eqref{CP_heaviside_Deguchi} numerically using the Lax–Friedrichs method \cite{LV92} after transforming it to an equivalent first-order system. We consider $t \in [0,2]$, $x \in [-4,4]$, and compactly supported initial conditions 
\[
g_0(x) = 
\begin{cases*}
-x^4(x-1)^4(x+1)^4, & if $\lvert x \rvert < 1$,  \\
                  0, & if $\lvert x \rvert \ge 1,$
\end{cases*}
\]
\[
g_1(x) = 
\begin{cases*}
4x^3(x-1)^4(x+1)^4+4x^4(x-1)^3(x+1)^4+4x^4(x-1)^4(x+1)^3, & if $\lvert x \rvert < 1$,  \\
                  0, & if $\lvert x \rvert \ge 1,$
\end{cases*}
\]
satisfying condition \eqref{condition2}. For the space and time discretisation, we fix the discretisation steps $\Delta x=\Delta t= 0.0005$. For different values of $\eps$, we compute the numerical solutions $u_\eps$ and we compare them with the piecewise distributional solution $\bar{u}$ obtained  in Section \ref{Section_piecewise_sol}. In particular, we compute the norm $\|u_\eps - \bar{u} \|_{L^2([-4,4])}$ at $t=2$ and show that it tends to $0$ for $\eps \to 0$ as in Theorem \ref{thm_convergence}.

We consider the mollifier $\varphi_{\eps}(x)=\frac{1}{\eps}\,\varphi(\frac{x}{\eps})$ with
\[
\varphi(x) = 
\begin{cases*}
\frac{1}{0.443994} e^{\frac{1}{(x^2 -1)}}, & if $\lvert x \rvert < 1$,  \\
                                        0, & if $\lvert x \rvert \ge 1.$
\end{cases*}
\]
Note that we do not expect that a change in the mollifier will impact our results as we have shown similarly in \cite{DGL:22}.  

Figure \ref{fig:numerics1} shows the exact solution $\bar{u}$ and the solutions $u_\eps$ at $t=2$ for $\eps=0.1$, 0.05, 0.01, 0.005, 0.001, 0.0005 (left), a close-up around $x=0$ (right) and the computed $L^2$ error norm for the various values of $\eps$ (bottom). We can see that the solutions $u_\eps$ better approach the exact solution $\bar{u}$ as $\eps$ is reduced and that the error norm decreases for $\eps \to 0$ as expected.

\begin{figure}[H]
\centerline{%
\includegraphics[width=0.49\textwidth]{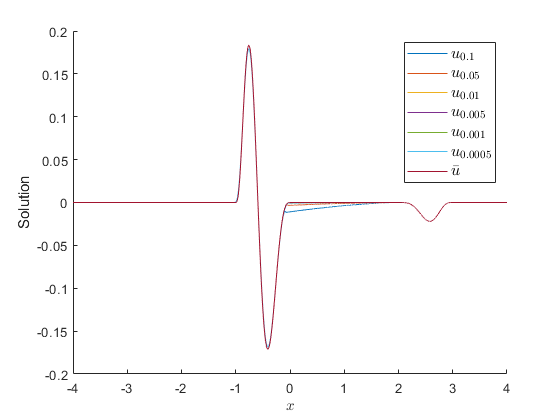}
\hfill
\includegraphics[width=0.49\textwidth]{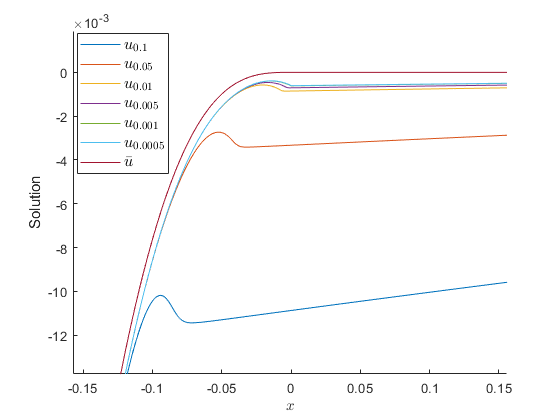}
}
\includegraphics[width=0.49\textwidth]{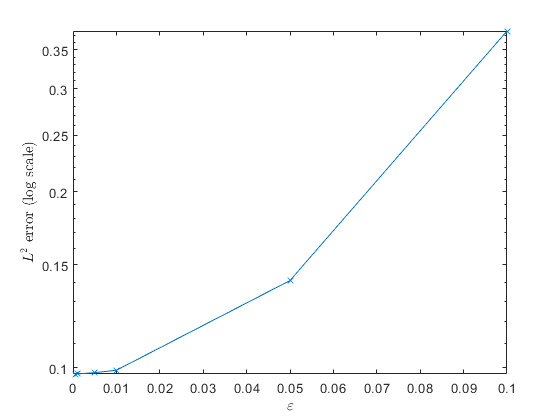}
\caption{Exact solution $\bar{u}$ and numerical approximations $u_\eps$ for $\eps=0.1,0.05,0.01,0.005,0.001,0.0005$ (left); a close-up around $x=0$ (right); norm $\|u_\eps - \bar{u} \|_{L^2([-4,4])}$ at $t=2$ versus $\eps$ (bottom).}
\label{fig:numerics1}
\end{figure}

%
%

\subsubsection{Delta} 
We now focus on the Cauchy problem \eqref{CP_heaviside_Deguchi} where we replace the Heaviside function $H_\eps$ by a regularised delta distribution $\delta_\eps$. We consider the same setting and discretisation as in the previous test, we compute the numerical solutions $u_\eps$ and we calculate the norm $\|u_\eps  \|_{L^2([-4,4])}$ at $t=0.05$ and show that it grows to infinity as $\eps \to 0$. As a result, $u_\eps$ cannot have a limit in $L^2([-4,4])$ since if it did, $\|u_\eps\|_{L^2([-4,4])}$ would have have to converge. 

In Figure \ref{fig:numerics2} we depict the computed $L^2$ norm at $t=0.05$ for $\eps=0.1$, 0.05, 0.01, 0.005, 0.001, 0.0005. As expected, this seems to confirm the blow-up of the $L^2$ norm in the case of coefficients as singular as delta.

\begin{figure}[H]
\centerline{%
\includegraphics[width=0.49\textwidth]{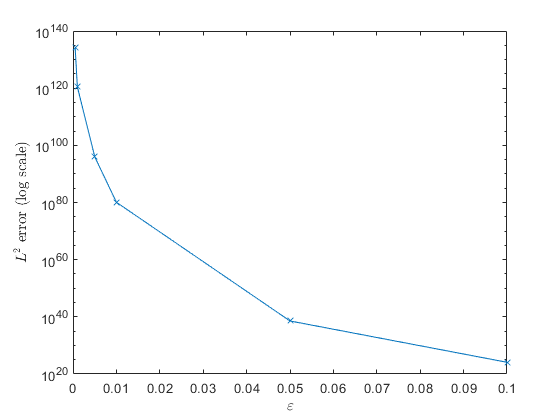}
}
\caption{Norm $\|u_\eps \|_{L^2([-4,4])}$ at $t=0.05$ versus $\eps$.}
\label{fig:numerics2}
\end{figure}

 \subsubsection{Homogeneous distributions} 
We finally study numerically the Cauchy problem \eqref{CP_heaviside_Deguchi} where the coefficient is replaced by  the  regularisation of the homogeneous distribution $\chi^{\alpha}_{+}$, as defined in \cite{HO:15}
\[
\chi^{\alpha}_{+}= \frac{x^{\alpha}_{+}}{\Gamma(\alpha +1)}.
\]
Following \cite{HO:15}, we recall that when $\alpha= 0 $, $\chi^{\alpha}_{+}$ corresponds to the Heaviside function and when  $\alpha= -1 $, $\chi^{\alpha}_{+}$ corresponds to the delta distribution. As before, we consider the same setting and discretisation. For different values of $\alpha$ between 0 and $-1$, we compute the numerical solutions $u_\eps$ and we calculate the norm $\|u_\eps  \|_{L^2([-4,4])}$ at $t=0.05$ and compare how it grows compared to the previous cases. 
In Figure \ref{fig:numerics3} we depict the computed $L^2$ norm at $t=0.05$ for $\eps=0.1$, 0.05, 0.01, 0.005, 0.001, 0.0005 and for $\alpha=0$, $-0.1$, $-0.25$, $-0.5$, $-0.75$, $-0.9$, $-1$. As expected, this seems to confirm a faster blow-up of the $L^2$ norm as $\alpha \to -1$  ($\alpha =-1$ corresponds to the case of delta). Note that for the case $\alpha =-1$, we have used the information from the prior analysis in the previous section. 

\begin{figure}[H]
\centerline{%
\includegraphics[width=0.49\textwidth]{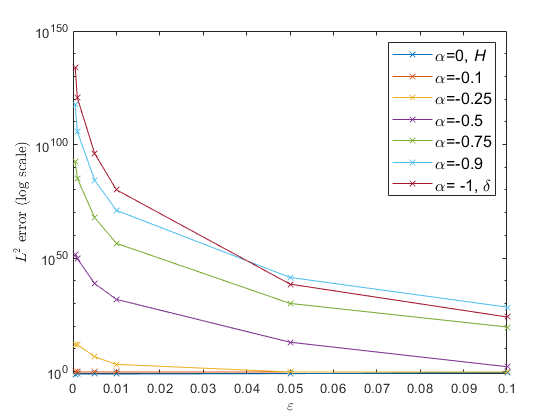}
}
\caption{Norm $\|u_\eps \|_{L^2([-4,4])}$ at $t=0.05$ versus $\eps$, for different values of $\alpha$ between 0 and $-1$.}
\label{fig:numerics3}
\end{figure}


\begin{thebibliography}{CDGS79}

\bibitem[ART19]{ART:19}
 {A.  Altybay, M. Ruzhansky and N. Tokmagambetov,}
 Wave equation with distributional propagation speed and mass term: numerical simulations. 
 \newblock {\em Appl. Math. E-Notes}, 19, 552--562, 2019.
 \bibitem[B84]{B:84}
{D. L. Brown,}
A Note on the Numerical Solution of the Wave Equation With Piecewise Smooth Coefficients
\newblock{\em Mathematics of Computation}, 42 , 166, 369--391, 1984.
 
 \bibitem[BO92]{BO:92}
H.~Biagioni and M.~Oberguggenberger.
\newblock Generalized solutions to the {K}orteweg - de {V}ries and the
  regularized long-wave equations.
\newblock {\em SIAM J. Math Anal.}, 23(4):923--940, 1992.

 


\bibitem[CK]{ColKi:02}
{F.~Colombini and T.~Kinoshita.}
\newblock On the Gevrey well posedness of the Cauchy problem for weakly hyperbolic equations of higher order.
\newblock {\em J. Diff.  Eq.}, 186:394--419, 2002.

\bibitem[CK02]{ColKi:02-2}
{F.~Colombini and T.~Kinoshita.}
\newblock
On the Gevrey wellposedness of the Cauchy problem for weakly hyperbolic equations of 4th order.
\newblock {\em Hokkaido Math. J.}, 31:39--60, 2002.

\bibitem[CS]{CS}
{F. Colombini and S. Spagnolo}.
An example of a weakly hyperbolic Cauchy problem not well posed in $C^\infty$.
{\em Acta Math.}, 148:243--253, 1982.

\bibitem[CDS]{CDS}
{F. Colombini, E. De Giorgi and S. Spagnolo}.
Sur les \'equations hyperboliques avec des coefficients qui ne d\'ependent que du temps.
{\em Ann. Scuola Norm. Sup. Pisa Cl. Sci.}, 6:511--559, 1979.


\bibitem[DO16]{DO:16} 
H. Deguchi, M. Oberguggenberger. Propagation of singularities for generalized solutions to wave equations with discontinuous coefficients.
\newblock {\em SIAM J. Math. Anal.}, 48, 397--442, 2016.

\bibitem[DGL22]{DGL:22}
{M. Discacciati,  C. Garetto and C. Loizou.}
Inhomogeneous wave equation with $t$-dependent singular coefficients.
\newblock{ \em J. Differential Equations}, 319, 131--185, 2022.



\bibitem[G21]{G:21}
{C. Garetto,}
On the wave equation with multiplicities and space-dependent irregular coefficients.
\newblock{\em Trans. Amer. Math. Soc} 374, 3131--3176, 2021.

\bibitem[GKOS01]{GKOS:01}
M.~Grosser, M.~Kunzinger, M.~Oberguggenberger, and R.~Steinbauer.
\newblock {\em Geometric Theory of generalized Functions with Applications to General Relativity}, volume 537 of
{Mathematics and its Applications}.
\newblock Kluwer Acad. Publ., Dordrecht, 2001.
 
\bibitem[GO14]{GO:14}
{C. Garetto and M. Oberguggenberger,}
Symmetrisers and generalised solutions for strictly hyperbolic systems with singular coefficients.
\newblock{\em Math. Nachr.} 288, No. 2–3, 185–-205, 2015.

\bibitem[GR12]{GR:11}
C.~Garetto and M.~Ruzhansky.
\newblock On the well-posedness of weakly hyperbolic equations with
  time-dependent coefficients.
\newblock {\em J. Differential Equations}, 253(5):1317--1340, 2012.

\bibitem[GR13]{GR:12}
C.~Garetto and M.~Ruzhansky.
\newblock Weakly hyperbolic equations with non-analytic coefficients and lower
  order terms.
\newblock {\em Math. Ann.}, 357(2):401--440, 2013.

\bibitem[GR14]{GR:14}
C.~Garetto and M.~Ruzhansky.
\newblock Hyperbolic  second order equations with non-regular time dependent coefficients
\newblock {\em Arch. Ration. Mech. Anal.}, 217(1):113--154, 2015. 

\bibitem[H14]{H:14}
{R. Haberman,}
Applied Partial Differential Equations with Fourier Series and Boundary Value Problems. 
\newblock{\em Pearson New International Edition}, 2014.


\bibitem[HO15]{HO:15}
{L. H{\"o}rmander,}
The Analysis of Linear Partial Differential Operators I: Distribution Theory and Fourier Analysis. 
\newblock{\em Springer Berlin Heidelberg}, 2015.



\bibitem[LV92]{LV92}
R. J. Le Veque.
Numerical methods for conservation laws.
Birkh\"auser, Basel, 1992.


\bibitem[O70]{O70}
O. A. Oleinik.
\newblock On the Cauchy problem for weakly hyperbolic equations.
\newblock{\em  Comm. Pure Appl. Math.} 23:569--586, 1970.

\bibitem[O92]{Oberguggenberger:Bk-1992}
M.~Oberguggenberger.
\newblock {\em Multiplication of distributions and applications to partial
  differential equations}, volume 259 of {\em Pitman Research Notes in
  Mathematics Series}.
\newblock Longman Scientific \& Technical, Harlow, 1992.

\bibitem[Sch54]{Schwartz:impossibility-1954}
L.~Schwartz.
\newblock Sur l'impossibilit{\'e} de la multiplication des distributions.
\newblock {\em C. R. Acad. Sci. Paris}, 239:847--848, 1954.

\bibitem[ST07]{ST}
S. Spagnolo and G. Taglialatela,
\newblock Homogeneous hyperbolic equations with coefficients depending on one space variable.   
\newblock{\em J. Differential Equations}, 4(3):533--553, 2007.


\end{thebibliography}
\end{document}